\documentclass[12pt,a4paper]{article}%

\usepackage{amssymb}
\usepackage{amsfonts}
\usepackage{amsmath}
\usepackage{graphicx}%
\setcounter{MaxMatrixCols}{30}

\providecommand{\U}[1]{\protect\rule{.1in}{.1in}}

\addtolength{\hoffset}{-1.5cm}
\addtolength{\textwidth}{3.0cm}

\newtheorem{theorem}{Theorem}

\newtheorem{definition}[theorem]{Definition}

\newtheorem{lemma}[theorem]{Lemma}
\newtheorem{notation}[theorem]{Notation}

\newtheorem{proposition}[theorem]{Proposition}
\newtheorem{remark}[theorem]{Remark}

\newenvironment{proof}[1][Proof]{\noindent\textbf{#1.} }{\ \rule{0.5em}{0.5em}}
\makeatletter
\renewcommand\@makefnmark{\relax}
\makeatother
\begin{document}

\title{On the Ornstein-Zernike behaviour for the Bernoulli bond percolation on
$\mathbb{Z}^{d},d\geq3,$ in the supercitical regime}
\author{M. Campanino$^{\#}$, M. Gianfelice$%
{{}^\circ}%
$\\$^{\#}$Dipartimento di Matematica\\Universit\`{a} degli Studi di Bologna\\P.zza di Porta San Donato, 5 \ I-40127\\campanin@dm.unibo.it\\$^{%
{{}^\circ}%
}$Dipartimento di Matematica\\Universit\`{a} della Calabria\\Campus di Arcavacata\\Ponte P. Bucci - cubo 30B\\I-87036 Arcavacata di Rende\\gianfelice@mat.unical.it}
\maketitle

\begin{abstract}
We prove Ornstein-Zernike behaviour in every direction for finite connection
functions of bond percolation on $\mathbb{Z}^{d}$ for $d\geq3$ when $p,$ the
probability of occupation of a bond, is sufficiently close to $1.$ Moreover,
we prove that equi-decay surfaces are locally analytic, strictly convex, with
positive Gaussian curvature.

\end{abstract}

\footnotetext{\emph{AMS Subject Classification }: 60K35, 60F15, 60K15, 82B43.
\par
\ \hspace*{0.2cm}\emph{Keywords and phrases}: Percolation, local limit
theorem, decay of connectivities, multidimensional renewal process.}

\section{Introduction and results}

Ornstein-Zernike behaviour of correlation and connection functions has been
rigorously proved for many models of statistical mechanics and percolation in
the high temperature or low probability regime, first for extreme values of
the parameter (see e.g. \cite{BF}) and then up to the critical point (see
\cite{CCC}, \cite{CI}, \cite{CIV}). Above the critical probability in
\cite{CIL} it was proved that in two dimensions finite connection functions,
i.e. the probabilities that two sites belong to the same finite cluster,
exhibit a different asymptotic behaviour, which is related to the probability
that two independent random walks in dimension $2$ do not intersect. In higher
dimensions, for $d\geq3,$ one expects that Ornstein-Zernike behaviour holds
for finite connections probabilities above critical probability. This was
proved in \cite{BPS} when $p$ (the probability that a bond is open) is close
to one for the connection probabilities in the direction of the axes. The
proof is based on cluster expansion.

The problem of the asymptotic behaviour of finite connection functions in
arbitrary directions presents an important difference with respect to that in
the directions of coordinate axes. Indeed, in the limit of $p$ tending to $1$
the probability distribution of the finite cluster containing two sites on a
coordinate axis, conditioned to its existence, tends to a delta measure
concentrated on the segment joining the two sites. The cluster expansion
presented in \cite{BPS} can be thought of as a perturbation about this
configuration that plays the role of ground state. In the case of two sites
that don't lie on the same coordinate axis, the limiting distribution is not
supported on a single configuration: one can say that the ground state is
degenerate. This makes the extension of the method used in \cite{BPS} problematic.

In this paper we prove Ornstein-Zernike behaviour in every direction for
finite connection functions of bond percolation on $\mathbb{Z}^{d}$ for
$d\geq3$ when $p,$ the probability of occupation of a bond, is sufficiently
close to $1.$ Moreover, we prove that equi-decay surfaces are locally
analytic, strictly convex with positive Gaussian curvature.

Our proofs rely in part on the methods developed in \cite{CI}, based on
multi-dimensional renewal theory and local limit theorem, but we have to deal
with new problems; in particular FKG inequality does not apply to finite
connection functions, as they are probabilities of non-monotone events. By
developing specific techniques we are able to treat the case when $p$ is
sufficiently close to $1.$

Here in the following are the main results of the paper and the notation that
we will use. In Section 2 we introduce the relevant connectivity functions and
their renewal structure. In Section 3 we prove the existence of the mass-gap
for the direct connectivity function and prove the main results of the paper.

\begin{theorem}
\label{thm1}For any $d\geq3,$ there exists $p^{\ast}=p^{\ast}\left(  d\right)
\in\left(  0,1\right)  $ such that, $\forall p\in\left(  p^{\ast},1\right)  ,$
uniformly in $x\in\mathbb{Z}^{d},\ \left\Vert x\right\Vert \rightarrow\infty,$%
\begin{equation}
\mathbb{P}_{p}\left(  0\longleftrightarrow x\ ,\ \left\vert \mathbf{C}%
_{\{0,x\}}\right\vert <\infty\right)  =\frac{\Phi_{p}\left(  \hat{x}\right)
}{\sqrt{\left(  2\pi\left\Vert x\right\Vert \right)  ^{d-1}}}e^{-\tau
_{p}\left(  x\right)  }\left(  1+o\left(  1\right)  \right)  \ ,
\end{equation}
where $\hat{x}:=\frac{x}{\left\Vert x\right\Vert },\ \Phi_{p}$ is a positive
real analytic function on $\mathbb{S}^{d-1}$ and $\tau_{p}$ an equivalent norm
in $\mathbb{R}^{d}.$
\end{theorem}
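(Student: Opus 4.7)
The plan is to follow the strategy of \cite{CI} adapted to the supercritical non-monotone setting. The starting point is the renewal representation for the finite connection function introduced in Section 2: decompose a finite cluster containing $0$ and $x$ into a sequence of ``irreducible'' pieces glued at successive cut-points along the direction of $x$, so that $\mathbb{P}_p(0\leftrightarrow x, |\mathbf{C}_{\{0,x\}}|<\infty)$ is expressible as a convolution series whose building block is a direct (irreducible) two-point function $f_p(\,\cdot\,)$. The asymptotics of the full connection function is then governed by the iterated convolution $\sum_{n\geq 1} f_p^{*n}(x)$, and the entire problem reduces to understanding the one-step weight $f_p$.

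Next, I would prove the key mass-gap property: there exists $\tau_p^{(0)}>\tau_p$ such that $f_p(x)\leq C e^{-\tau_p^{(0)}\|x\|}$ uniformly in directions. This is the step where the axial-only result of \cite{BPS} breaks down, because the irreducible ground state is degenerate in non-coordinate directions. I would exploit proximity to $p=1$ through a direct expansion that replaces FKG-monotonicity (which fails for the non-monotone event $\{|\mathbf{C}_{\{0,x\}}|<\infty\}$) with a combinatorial/energy argument counting dual surfaces or deformations of the minimal direct connection. The smallness of $1-p$ controls the combinatorial entropy against the surface cost, giving a strictly larger decay rate for the irreducible piece than for the full two-point function.

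With mass-gap in hand, one can tilt: for each unit vector $\hat{n}\in\mathbb{S}^{d-1}$, choose the conjugate momentum $\lambda=\lambda(\hat{n})$ maximising $\langle\lambda,\hat{n}\rangle$ subject to $\sum_{y}e^{\langle\lambda,y\rangle}f_p(y)=1$; the mass-gap ensures that this generating function is finite (and real-analytic in $\lambda$) in a neighbourhood of the boundary of its domain of convergence, so the tilted weights $\tilde{f}_p(y):=e^{\langle\lambda,y\rangle}f_p(y)$ form a bona fide probability distribution on $\mathbb{Z}^d$ with exponential moments of all orders. The surface tension $\tau_p(x)$ is then identified as the Legendre transform, and its real-analyticity and strict convexity (hence positivity of Gaussian curvature of $\{\tau_p=1\}$) follow from the strict convexity of the tilted log-moment generating function, which in turn follows from the non-degeneracy of the covariance of $\tilde{f}_p$.

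Finally, I would derive the prefactor by applying a multidimensional local limit theorem to the renewal series $\sum_n \tilde{f}_p^{*n}$: after tilting, this is exactly the Green function of a centred (after shifting by the mean drift in direction $\hat{x}$) random walk with step distribution $\tilde{f}_p$, and a standard LLT of Stone type yields the factor $\left(2\pi\|x\|\right)^{-(d-1)/2}$ together with a positive analytic prefactor $\Phi_p(\hat{x})$ determined by the covariance matrix of $\tilde{f}_p$ and by the boundary terms of the renewal decomposition. I expect the main obstacle to be the mass-gap step, since it is the place where one must substitute for FKG and handle the degeneracy of the ground state uniformly in direction; once that is established, the tilting and LLT machinery runs in a by-now standard way.
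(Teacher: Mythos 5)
Your plan matches the paper's argument step for step: renewal decomposition with break points, mass-gap for the irreducible weight obtained by trading surface energy against entropy at small $1-p$ in place of FKG, exponential tilting of $f_t^{(p)}$ along the boundary of the domain of its generating function, and a multidimensional local limit theorem for the tilted renewal series yielding the $(2\pi\|x\|)^{-(d-1)/2}$ prefactor and the analytic, strictly convex surface tension. The one place you leave underspecified is precisely the place you flag as the main obstacle: the paper fills it in with a Peierls bound on the boundary size (Proposition~\ref{P1}) followed by a slab-crossing renormalization showing that a near-minimal dual surface must contain many $t$-bonds, which can then be converted into genuine renewal points by local modifications at bounded probabilistic cost.
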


As a by-product of the proof of the previous theorem we also obtain the
following result.

\begin{theorem}
For any $d\geq3,$ there exists $p^{\ast}=p^{\ast}\left(  d\right)  \in\left(
0,1\right)  $ such that, $\forall p\in\left(  p^{\ast},1\right)  ,$ the
equi-decay set is locally analytic and strictly convex. Moreover, the Gaussian
curvature of the equi-decay set is uniformly positive.
\end{theorem}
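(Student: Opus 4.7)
The plan is to read off the geometric properties of the equi-decay set directly from the renewal representation and the mass-gap for the direct connectivity established in Sections 2 and 3. Let $q_p$ denote the step distribution of the multi-dimensional renewal process associated with the finite connection function, and let $\lambda_p(h) := \sum_{y \in \mathbb{Z}^d} e^{\langle h, y \rangle} q_p(y)$ be its moment generating function. The mass-gap gives that $q_p$ has finite exponential moments in a neighbourhood $\mathcal{D}\subset\mathbb{R}^d$ of the origin, so $\lambda_p$ is real analytic on $\mathcal{D}$, and standard renewal arguments identify $\tau_p$ with the support function of $\{h\in\mathcal{D} : \lambda_p(h)\leq 1\}$. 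Equivalently, the equi-decay set $\{x : \tau_p(x)=1\}$ is the polar dual of the level set $\{\lambda_p = 1\}$.

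First I would show that $\Lambda_p := \log\lambda_p$ is strictly convex on $\mathcal{D}$. Its Hessian at $h$ equals the covariance matrix of the tilted probability measure $y\mapsto e^{\langle h,y\rangle - \Lambda_p(h)} q_p(y)$, which is positive definite provided this tilted measure is not concentrated on an affine hyperplane. For $p$ close to $1$ one can exhibit, using the renewal representation of Section 2, explicit irreducible patterns realising steps along $d$ linearly independent directions, each carrying non-zero $q_p$-mass; this gives the required non-degeneracy. Analyticity and strict convexity of $\Lambda_p$, combined with the analytic implicit function theorem, then imply that the level set $\{\Lambda_p = 0\}$ is a locally analytic, strictly convex hypersurface with pointwise positive Gaussian curvature.

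The properties of the equi-decay set follow from those of $\{\Lambda_p = 0\}$ by convex duality: the polar dual of a $C^{\omega}$ strictly convex body with strictly positive curvature is again $C^{\omega}$, strictly convex, and has strictly positive curvature, with the principal curvatures related to those of the primal surface through the Gauss map. A compactness argument on $\mathbb{S}^{d-1}$ then upgrades the pointwise positivity of the Gaussian curvature of the equi-decay set to the \emph{uniform} positivity stated in the theorem.

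The main obstacle I anticipate is establishing the non-degeneracy of the tilted covariance matrix \emph{uniformly} as the tilt $h$ ranges over a neighbourhood of the dual equi-decay surface. This is the point where the degeneracy of the ground state in non-axis directions, highlighted in the introduction as the reason the perturbative method of \cite{BPS} does not extend, has to be dealt with. The strategy would be to use the renewal construction of Section 2 to show that, for every such $h$, the tilted distribution retains exponential tails with a uniformly controlled rate and that a fixed family of full-rank patterns keeps a uniformly positive tilted mass; an analyticity-plus-continuity argument in $h$ then yields a uniform lower bound on the smallest eigenvalue of the Hessian of $\Lambda_p$, and hence on the Gaussian curvature of the equi-decay set.
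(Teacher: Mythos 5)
Your proposal is correct and follows essentially the same route as the paper: tilt the irreducible step distribution $f_t^{(p)}$ along $s$ on the dual surface $\{F_t^{(p)}=1\}$, identify $\mathrm{Hess}\log F_t^{(p)}(s)$ as the covariance of the tilted measure $Q_{t;s}^{(p)}$ to get analyticity and non-degeneracy of $\partial\mathcal{K}_t^p$, and transfer these properties to the equi-decay set through the polar relation $\langle s,\mu_t^p(s)\rangle=\tau_p(\mu_t^p(s))$ and the quadratic lower bound $\tau_p^t(\mu)\geq\langle\mu,s\rangle+c_{14}\|\mu-\mu_t^p(s)\|^2$. The non-degeneracy obstacle you flag is dispatched in the paper more directly than you anticipate: $f_t^{(p)}(x)>0$ on a whole half-space automatically rules out concentration of any tilted measure on an affine hyperplane, and compactness together with analyticity of $F_t^{(p)}$ then yields the uniform curvature bound without needing a separate family of ``full-rank patterns.''
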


\subsection{Notation}

For any $x\in\mathbb{R}^{d},\ d\geq1,$ Let us denote by $\left\vert
x\right\vert :=\sum_{i=1}^{d}\left\vert x_{i}\right\vert ,$ by $\left\langle
\cdot,\cdot\right\rangle $ the scalar product in $\mathbb{R}^{d}$ and by
$\left\Vert \cdot\right\Vert :=\sqrt{\left\langle \cdot,\cdot\right\rangle }$
the associated Euclidean norm. We then set $\mathbb{S}^{d-1}:=\{z\in
\mathbb{R}^{d}:\left\Vert z\right\Vert =1\}$ and $\hat{x}:=\frac{x}{\left\Vert
x\right\Vert }.$ Given a set $\mathcal{A}\subset\mathbb{R}^{d},$ let us denote
by $\mathcal{A}^{c}$ its complement and by $\mathcal{P}\left(  \mathcal{A}%
\right)  $ the collection of all subsets of $\mathcal{A}.$ We also set
$\mathcal{P}_{2}\left(  \mathcal{A}\right)  :=\{A\in\mathcal{P}\left(
\mathcal{A}\right)  :\left\vert A\right\vert =2\},$ where $\left\vert
A\right\vert $ is the cardinality of $A.$ Moreover, we denote by
$\mathcal{\mathring{A}},\overline{\mathcal{A}}$ respectively the interior of
$\mathcal{A}$ and the closure of $\mathcal{A}$ and set $\mathfrak{d}%
\mathcal{A}:=\overline{\mathcal{A}}\backslash\mathcal{\mathring{A}}$ the
boundary of $\mathcal{A}$ in the Euclidean topology. Furthermore, if
$x\in\mathbb{R}^{d}, $ we set
\begin{equation}
x+\mathcal{A}:=\left\{  y\in\mathbb{R}^{d}:y-x\in\mathcal{A}\right\}
\end{equation}
and, denoting by $B$ the closed unit ball in $\mathbb{R}^{d},$ for $r>0,$ let
$rB:=\left\{  x\in\mathbb{R}^{d}:\left\Vert x\right\Vert \leq r\right\}  ,
$\linebreak$B_{r}\left(  x\right)  :=x+rB.$

To make the paper self-contained, we will now introduce those notions of graph
theory which are going to be used in the sequel and refer the reader to
\cite{B} for an account on this subject.

Let $G=\left(  V,E\right)  $ be a graph whose set of vertices and set of edges
are given respectively by a finite or denumerable set $V$ and $E\subset
\mathcal{P}_{2}\left(  V\right)  .\ G^{\prime}=\left(  V^{\prime},E^{\prime
}\right)  $ such that $V^{\prime}\subseteq V$ and $E^{\prime}\subseteq
\mathcal{P}_{2}\left(  V^{\prime}\right)  \cap E$ is said to be a subgraph of
$G$ and this property is denoted by $G^{\prime}\subseteq G.$ If $G^{\prime
}\subseteq G,$ we denote by $V\left(  G^{\prime}\right)  $ and $E\left(
G^{\prime}\right)  $ respectively the set of vertices and the collection of
the edges of $G^{\prime}.\ \left\vert V\left(  G^{\prime}\right)  \right\vert
$ is called the \emph{order} of $G^{\prime}$ while $\left\vert E\left(
G^{\prime}\right)  \right\vert $ is called its \emph{size}. Given $G_{1}%
,G_{2}\subseteq G,$ we denote by $G_{1}\cup G_{2}:=\left(  V\left(
G_{1}\right)  \cup V\left(  G_{2}\right)  ,E\left(  G_{1}\right)  \cup
E\left(  G_{2}\right)  \right)  \subset G$ the \emph{graph union }of $G_{1}$
and $G_{2}. $ A \emph{path} in $G$ is a subgraph $\gamma$ of $G$ such that
there is a bijection\linebreak$\left\{  0,..,\left\vert E\left(
\gamma\right)  \right\vert \right\}  \ni i\longmapsto v\left(  i\right)
:=x_{i}\in V\left(  \gamma\right)  $ with the property that any $e\in E\left(
\gamma\right)  $ can be represented as $\left\{  x_{i-1},x_{i}\right\}  $ for
$i=1,..,\left\vert E\left(  \gamma\right)  \right\vert .$ A \emph{walk} in $G$
of length $l\geq1$ is an alternating sequence $x_{0},e_{1},x_{1}%
,..,e_{l},x_{l}$ of vertices and edges of $G$ such that $e_{i}=\left\{
x_{i-1},x_{i}\right\}  $ $i=1,..,l.$ Therefore, paths can be associated to
walks having distinct vertices. Two distinct vertices $x,y$ of $G$ are said to
be \emph{connected} if there exists a path $\gamma\subseteq G$ such that
$x_{0}=x,\ x_{\left\vert E\left(  \gamma\right)  \right\vert }=y.$ A graph $G$
is said to be \emph{connected} if any two distinct elements of $V\left(
G\right)  $ are connected. The maximal connected subgraphs of $G$ are called
\emph{components} of $G.$ Given $E^{\prime}\subseteq E,$ we denote by
$G\left(  E^{\prime}\right)  :=\left(  V,E^{\prime}\right)  $ the
\emph{spanning} graph of $E.$ We also define
\begin{equation}
V\left(  E^{\prime}\right)  :=\left(  \bigcup_{e\in E^{\prime}}e\right)
\subset V\ .
\end{equation}
Given $V^{\prime}\subseteq V,$ we set
\begin{equation}
E\left(  V^{\prime}\right)  :=\left\{  e\in E:e\subset V^{\prime}\right\}
\end{equation}
and denote by $G\left[  V^{\prime}\right]  :=\left(  V^{\prime},E\left(
V^{\prime}\right)  \right)  $ that is called the subgraph of $G$
\emph{induced} or \emph{spanned} by $V^{\prime}.$ Moreover, if $G^{\prime
}\subset G,$ we denote by $G\backslash G^{\prime}$ the graph $G\left[
V\backslash V\left(  G^{\prime}\right)  \right]  \subseteq G$ and define the
\emph{boundary} of $G^{\prime}$ as the set
\begin{equation}
\partial G^{\prime}:=\left\{  e\in E\backslash E\left(  G^{\prime}\right)
:\left\vert e\cap V\left(  G^{\prime}\right)  \right\vert =1\right\}  \subset
E\ . \label{defbG}%
\end{equation}

Let $\mathbb{L}^{d}$ be the $d$-dimensional cubic lattice, that is the
geometric graph whose set of vertices is $\mathbb{Z}^{d}$ and whose set of
edges is
\begin{equation}
\mathbb{E}^{d}:=\{\{x,y\}\in\mathcal{P}^{\left(  2\right)  }\left(
\mathbb{Z}^{d}\right)  :\left\vert x-y\right\vert =1\}\ .
\end{equation}
If $G$ is a subgraph of $\mathbb{L}^{d}$ of finite order, we denote by
$\overline{G}$ the graph induced by the union of $V\left(  G\right)  $ with
the the sets of vertices of the connected components of the $\mathbb{L}%
^{d}\backslash G$ of finite size. We define the \emph{external boundary} of
$G$ to be $\overline{\partial}G:=\partial\overline{G}.$ We remark that, given
$G_{i}:=\left(  V_{i},E_{i}\right)  ,\ i=1,2$ two connected subgraphs of
$\mathbb{L}^{d}$ of finite size, by (\ref{defbG}), $\partial\left(  G_{1}\cup
G_{2}\right)  \subseteq\partial G_{1}\cup\partial G_{2}.$ Moreover,
\begin{equation}
\overline{\partial}\left(  G_{1}\cup G_{2}\right)  =\partial\left(
\overline{G_{1}\cup G_{2}}\right)  \subseteq\partial\overline{G_{1}}%
\cup\partial\overline{G_{2}}\ .
\end{equation}
Furthermore, looking at $\mathbb{L}^{d}$ as a cell complex, i.e. as the union
of $\mathbb{Z}^{d}$ and $\mathbb{E}^{d}$ representing respectively the
collection of $0$-cells and of $1$-cells, we denote by $\left(  \mathbb{Z}%
^{d}\right)  ^{\ast}$ the collection of $d$-cells dual $0$-cells in
$\mathbb{L}^{d},$\ that is the collection of unit $d$-cubes centered in the
elements of $\mathbb{Z}^{d}$ (Voronoi cells of $\mathbb{L}^{d}$), and by
$\left(  \mathbb{E}^{d}\right)  ^{\ast}$ the collection of $\left(
d-1\right)  $-cells dual $1$-cells in $\mathbb{L}^{d},$ usually called
\emph{plaquettes} in the physics literature. We also define
\begin{equation}
\mathfrak{E}:=\left\{  \left\{  e_{1}^{\ast},e_{2}^{\ast}\right\}
\in\mathcal{P}^{\left(  2\right)  }\left(  \left(  \mathbb{E}^{d}\right)
^{\ast}\right)  :\mathrm{codim}\left(  \mathfrak{d}e_{1}^{\ast}\cap
\mathfrak{d}e_{2}^{\ast}\right)  =2\right\}
\end{equation}
and consider the graph $\mathfrak{G}:=\left(  \left(  \mathbb{E}^{d}\right)
^{\ast},\mathfrak{E}\right)  .$

A bond percolation configuration on $\mathbb{L}^{d}$ is a map $\mathbb{E}%
^{d}\ni e\longmapsto\omega_{e}\in\{0,1\}.$ Setting\linebreak$\Omega
:=\{0,1\}^{\mathbb{E}^{d}}$ we define
\begin{equation}
\Omega\ni\omega\longmapsto E\left(  \omega\right)  :=\left\{  e\in
\mathbb{E}^{d}:\omega_{e}=1\right\}  \in\mathcal{P}\left(  \mathbb{E}%
^{d}\right)  \ , \label{wE}%
\end{equation}

Let $\mathcal{F}$ be the $\sigma$-algebra generated by the cylinder events of
$\Omega.$ Given $p\in\left[  0,1\right]  ,$ we consider the independent
Bernoulli probability measure $\mathbb{P}_{p}$ on $\left(  \Omega
,\mathcal{F}\right)  $ with parameter $p.$

Denoting by $\mathbb{G}^{d}:=\left\{  G\subseteq\mathbb{L}^{d}:G=G\left(
E\right)  \ ,\ E\in\mathcal{P}\left(  \mathbb{E}^{d}\right)  \right\}  $ the
collection of spanning subgraphs of $\mathbb{L}^{d},$\ we define the random
graph
\begin{equation}
\Omega\ni\omega\longmapsto G\left(  \omega\right)  :=G\left(  E\left(
\omega\right)  \right)  \in\mathbb{G}^{d}\ . \label{wG}%
\end{equation}

Then, given $l\geq1,\ x_{1},..,x_{l}\in\mathbb{Z}^{d},$ we denote by
\begin{equation}
\Omega\ni\omega\longmapsto\mathbf{C}_{\left\{  x_{1},..,x_{l}\right\}
}\left(  \omega\right)  \in\mathcal{P}\left(  \mathbb{Z}^{d}\right)
\end{equation}
the \emph{common open cluster of the points} $x_{1},..,x_{l}\in\mathbb{Z}%
^{d},$ that is the set of vertices of the connected component of the random
graph $G\left(  \omega\right)  $ to which these points belong, provided it
exists, and define, in the case $\mathbf{C}_{\left\{  x_{1},..,x_{l}\right\}
} $ is finite, the random set $\overline{\partial}\mathbf{C}_{\{x_{1}%
,..,x_{l}\}}$ to be equal to $\overline{\partial}G$ if $G$ is the component of
$G\left(  \omega\right)  $ whose set of vertices is $\mathbf{C}_{\{x_{1}%
,..,x_{l}\}}$ and the random set\linebreak$\mathbf{S}_{\{x_{1},..,x_{l}%
\}}:=\left(  \overline{\partial}\mathbf{C}_{\{x_{1},..,x_{l}\}}\right)
^{\ast}.$

\section{Analysis of connectivities}

Given $x,y\in\mathbb{Z}^{d},$ we set
\begin{align}
\varphi\left(  x,y\right)   &  :=\left\{
\begin{array}
[c]{ll}%
\min\left\{  \left\vert \overline{\partial}\mathbf{C}_{\{x,y\}}\left(
\omega\right)  \right\vert :\omega\in\left\{  0<\left\vert \mathbf{C}%
_{\{x,y\}}\left(  \omega\right)  \right\vert <\infty\right\}  \right\}  &
x\neq y\\
0 & x=y
\end{array}
\right. \label{fi}\\
&  =\left\{
\begin{array}
[c]{ll}%
\min\left\{  \left\vert \mathbf{S}_{\{x,y\}}\left(  \omega\right)  \right\vert
:\omega\in\left\{  0<\left\vert \mathbf{C}_{\{x,y\}}\left(  \omega\right)
\right\vert <\infty\right\}  \right\}  & x\neq y\\
0 & x=y
\end{array}
\right.  \ .\nonumber
\end{align}
$\varphi$ is symmetric and translation invariant, therefore in the sequel we
will write
\begin{equation}
\varphi\left(  x,y\right)  =\varphi\left(  x-y\right)  \ .
\end{equation}

For any $x\in\mathbb{Z}^{d}$ and $k\geq\varphi\left(  x\right)  ,$ let us set
\begin{equation}
\mathbf{A}_{k}\left(  x\right)  :=\left\{  \omega\in\Omega:\left\vert
\overline{\partial}\mathbf{C}_{\{0,x\}}\left(  \omega\right)  \right\vert
=k\right\}  =\left\{  \omega\in\Omega:\left\vert \mathbf{S}_{\{0,x\}}\left(
\omega\right)  \right\vert =k\right\}
\end{equation}
and $\mathbf{A}^{k}\left(  x\right)  :=%
{\textstyle\bigvee_{l\geq k}}
\mathbf{A}_{l}\left(  x\right)  .$ We define
\begin{align}
\psi_{k}\left(  x\right)   &  :=\min\left\{  \left\vert E\left(
\mathbf{C}_{\{0,x\}}\left(  \omega\right)  \right)  \right\vert :\omega
\in\mathbf{A}_{k}\left(  x\right)  \right\}  \ ,\label{psikx}\\
\Psi_{k}\left(  x\right)   &  :=\min\left\{  \left\vert E\left(
\mathbf{C}_{\{0,x\}}\left(  \omega\right)  \right)  \right\vert :\omega
\in\mathbf{A}^{k}\left(  x\right)  \right\}  =\min_{l\geq k}\psi_{l}\left(
x\right)  \label{Psikx}%
\end{align}
and set $\mathbf{A}\left(  x\right)  :=\mathbf{A}_{\varphi\left(  x\right)
}\left(  x\right)  $ and consequently $\psi\left(  x\right)  :=\psi
_{\varphi\left(  x\right)  },\Psi\left(  x\right)  :=\Psi_{\varphi\left(
x\right)  }\left(  x\right)  .$

\begin{remark}
Given $x\in\mathbb{Z}^{d},$ for $i=0,..,d,$ let $\gamma_{i}$ be the path such
that $\gamma_{0}=\varnothing$ and
\begin{equation}
V\left(  \gamma_{i}\right)  =\left\{  \operatorname{sign}\left(  x_{i}\right)
u_{i},2\operatorname{sign}\left(  x_{i}\right)  u_{i},..,\left\vert
x_{i}\right\vert \operatorname{sign}\left(  x_{i}\right)  u_{i}\right\}
\qquad i=1,..,d\ .
\end{equation}
Let also
\begin{equation}
\gamma:=\bigcup_{i=0}^{d-1}\left[  \left(  \sum_{j=0}^{i}x_{j}u_{j}\right)
+\gamma_{i+1}\right]  \ .
\end{equation}
By construction
\begin{align}
\left\vert \gamma\right\vert  &  :=\left\vert E\left(  \gamma\right)
\right\vert =\left\vert x\right\vert \ ,\\
\left\vert \overline{\partial}V\left(  \gamma\right)  \right\vert  &
=\left\vert \partial V\left(  \gamma\right)  \right\vert =2\left(  d-1\right)
\left(  \left\vert x\right\vert +1\right)  +2\ .
\end{align}
Hence, by (\ref{psikx}), (\ref{Psikx}) and (\ref{fi}),
\begin{align}
\psi\left(  x\right)   &  \geq\Psi\left(  x\right)  =\left\vert x\right\vert
=\min\left\{  \left\vert \omega\right\vert :\omega\in\{0\longleftrightarrow
x\}\right\}  \ ,\\
\varphi\left(  x\right)   &  \leq2\left(  d-1\right)  \left(  \left\vert
x\right\vert +1\right)  +2\leq2\left(  d-1\right)  \left(  \psi\left(
x\right)  +1\right)  +2\ . \label{fi<psi}%
\end{align}

\end{remark}

\begin{lemma}
\label{sfi}For any $x,y\in\mathbb{Z}^{d},$%
\begin{equation}
\varphi\left(  x\right)  \leq\varphi\left(  y\right)  +\varphi\left(
x-y\right)  \ . \label{gs1}%
\end{equation}

\end{lemma}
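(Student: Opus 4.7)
The plan is to establish subadditivity by a standard \emph{concatenation} of minimizing configurations at the common vertex $y$, followed by the boundary containment already observed in the preamble.

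First, using that $\varphi$ is translation invariant and symmetric, I interpret the two terms on the right of (\ref{gs1}) as follows: $\varphi(y)$ is the minimum of $|\overline{\partial}\mathbf{C}_{\{0,y\}}(\omega)|$ over configurations in which $0\leftrightarrow y$ in a finite cluster, and $\varphi(x-y)=\varphi(y,x)$ is the minimum of $|\overline{\partial}\mathbf{C}_{\{y,x\}}(\omega)|$ over configurations in which $y\leftrightarrow x$ in a finite cluster. Pick minimizers $\omega_1,\omega_2$ and let $G_1,G_2$ be the corresponding open clusters, viewed as finite connected subgraphs of $\mathbb{L}^d$, with $V(G_1)\supseteq\{0,y\}$, $V(G_2)\supseteq\{y,x\}$, and $|\overline{\partial}G_i|$ realising the two minima.

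Next, I construct an ad hoc configuration $\omega\in\Omega$ by declaring the open edges to be exactly $E(G_1)\cup E(G_2)$ and all other edges closed. Because $y\in V(G_1)\cap V(G_2)$, the graph union $G_1\cup G_2$ is connected, finite, and contains both $0$ and $x$; moreover, since no edge outside $E(G_1)\cup E(G_2)$ is open in $\omega$, the open cluster of $0$ under $\omega$ is precisely $G_1\cup G_2$. Hence $\omega\in\{0<|\mathbf{C}_{\{0,x\}}|<\infty\}$ and, by the definition (\ref{fi}),
\begin{equation}
\varphi(x)\leq |\overline{\partial}\mathbf{C}_{\{0,x\}}(\omega)|=|\overline{\partial}(G_1\cup G_2)|.
\end{equation}

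Finally, I invoke the inclusion
\begin{equation}
\overline{\partial}(G_1\cup G_2)\subseteq\overline{\partial}G_1\cup\overline{\partial}G_2
\end{equation}
recorded in the paper just before the statement of the lemma, which yields
\begin{equation}
|\overline{\partial}(G_1\cup G_2)|\leq|\overline{\partial}G_1|+|\overline{\partial}G_2|=\varphi(y)+\varphi(x-y),
\end{equation}
proving (\ref{gs1}). The only delicate point is justifying that $\mathbf{C}_{\{0,x\}}(\omega)=V(G_1)\cup V(G_2)$ (so that the minimum defining $\varphi(x)$ can be upper-bounded by the boundary of the union rather than by something larger coming from spurious open edges); this is where the explicit construction of $\omega$, forcing all edges outside $E(G_1)\cup E(G_2)$ to be closed, is essential, and it handles the only genuine obstacle in the argument.
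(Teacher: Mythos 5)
Your proof is correct and takes essentially the same route as the paper's: pick minimizing clusters for $\{0,y\}$ and for $\{y,x\}$, merge them at the common vertex $y$ into a single admissible configuration for $\{0,x\}$, and apply the boundary inclusion $\overline{\partial}(G_1\cup G_2)\subseteq\overline{\partial}G_1\cup\overline{\partial}G_2$ recorded in the preamble. The only difference is presentational: where the paper merely asserts the existence of a suitable $\omega_3$ with $\mathbf{C}_{\{0,x\}}(\omega_3)=\mathbf{C}_{\{0,y\}}(\omega_2)\cup\mathbf{C}_{\{y,x\}}(\omega_1)$, you construct it explicitly by opening exactly $E(G_1)\cup E(G_2)$ and closing everything else, which makes the verification that the resulting cluster is precisely the graph union transparent.
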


\begin{proof}
Given $x,y\in\mathbb{Z}^{d},$ let
\begin{align}
\omega_{1}  &  \in\left\{  \omega\in\Omega:\left\vert \overline{\partial
}\mathbf{C}_{\left\{  y,x\right\}  }\right\vert =\varphi\left(  x-y\right)
\right\}  \ ,\\
\omega_{2}  &  \in\left\{  \omega\in\Omega:\left\vert \overline{\partial
}\mathbf{C}_{\left\{  0,y\right\}  }\right\vert =\varphi\left(  y\right)
\right\}  \ .
\end{align}
there exists $\omega_{3}\in\left\{  0<\left\vert \mathbf{C}_{\{0,x\}}\left(
\omega\right)  \right\vert <\infty\right\}  $ such that $\mathbf{C}_{\left\{
0,x\right\}  }\left(  \omega_{3}\right)  =\mathbf{C}_{\left\{  0,y\right\}
}\left(  \omega_{2}\right)  \cup\mathbf{C}_{\left\{  y,x\right\}  }\left(
\omega_{1}\right)  .$ Moreover, $\overline{\partial}\mathbf{C}_{\left\{
0,x\right\}  }\subseteq\overline{\partial}\mathbf{C}_{\left\{  0,y\right\}
}\cup\overline{\partial}\mathbf{C}_{\left\{  y,x\right\}  }.$ Thus,
\begin{equation}
\varphi\left(  x\right)  \leq\left\vert \overline{\partial}\mathbf{C}%
_{\left\{  0,x\right\}  }\left(  \omega_{3}\right)  \right\vert \leq\left\vert
\overline{\partial}\mathbf{C}_{\left\{  0,y\right\}  }\left(  \omega
_{2}\right)  \right\vert +\left\vert \overline{\partial}\mathbf{C}_{\left\{
y,x\right\}  }\left(  \omega_{1}\right)  \right\vert =\varphi\left(
x-y\right)  +\varphi\left(  y\right)  \ .
\end{equation}

\end{proof}

\begin{proposition}
\label{deffi-}Let, for any $n\in\mathbb{N},\ \mathbb{R}^{d}\ni x\longmapsto
\bar{\varphi}_{n}\left(  x\right)  :=\frac{\varphi\left(  \left\lfloor
nx\right\rfloor \right)  }{n}\in\mathbb{R}^{+}.$ The sequence $\left\{
\bar{\varphi}_{n}\right\}  _{n\in\mathbb{N}}$ converges pointwise to
$\bar{\varphi} $ which is a convex, homogeneous-of-order-one function on
$\mathbb{R}^{d}.$ Moreover, $\left\{  \bar{\varphi}_{n}\right\}
_{n\in\mathbb{N}}$ converges uniformly on $\mathbb{S}^{d-1}.$
\end{proposition}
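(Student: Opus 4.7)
The plan is to exploit the fact that $\varphi$ is subadditive on $\mathbb{Z}^d$ by Lemma \ref{sfi} and grows at most linearly, $\varphi(x)\leq 2(d-1)(|x|+1)+2$, from the preceding Remark. A Fekete-type argument along each lattice ray then yields pointwise convergence on $\mathbb{Q}^d$, and an asymptotic equi-Lipschitz bound promotes this to pointwise convergence on all of $\mathbb{R}^d$ and to uniform convergence on $\mathbb{S}^{d-1}$.

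First I would establish a Lipschitz-type estimate for $\varphi$ on $\mathbb{Z}^d$. The symmetry of $\varphi$ combined with Lemma \ref{sfi} gives $|\varphi(u)-\varphi(v)|\leq \varphi(u-v)$, and the linear bound on $\varphi$ yields constants $C,C'$ (depending only on $d$) with $|\varphi(u)-\varphi(v)|\leq C|u-v|+C'$ for all $u,v\in\mathbb{Z}^d$. Since $|\lfloor nx\rfloor-\lfloor ny\rfloor-n(x-y)|\leq 2d$, rescaling by $1/n$ gives
\[
|\bar\varphi_n(x)-\bar\varphi_n(y)|\leq C|x-y|+\frac{2dC+C'}{n},\qquad x,y\in\mathbb{R}^d,
\]
so $\{\bar\varphi_n\}$ is asymptotically equi-Lipschitz. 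For pointwise convergence on rationals, fix $z\in\mathbb{Z}^d$ and $m\in\mathbb{N}$. Subadditivity makes $k\mapsto\varphi(kz)$ subadditive, and Fekete's lemma produces $L(z):=\lim_{k\to\infty}\varphi(kz)/k=\inf_k\varphi(kz)/k\in[0,\infty)$. Writing $n=km+r$ with $0\leq r<m$ and using the Lipschitz estimate to control $|\varphi(\lfloor nz/m\rfloor)-\varphi(kz)|$, one obtains $\bar\varphi_n(z/m)\to L(z)/m$. Density of $\mathbb{Q}^d$ in $\mathbb{R}^d$ together with the asymptotic equi-Lipschitz property then forces $\{\bar\varphi_n(x)\}$ to be Cauchy for every $x\in\mathbb{R}^d$, defining the limit $\bar\varphi$.

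Homogeneity of order one is straightforward: for $\lambda\geq 0$, setting $m:=\lfloor n\lambda\rfloor$ and comparing $\lfloor n\lambda x\rfloor$ with $\lfloor mx\rfloor$ via the Lipschitz estimate yields $\bar\varphi(\lambda x)=\lambda\bar\varphi(x)$. For subadditivity of $\bar\varphi$, apply Lemma \ref{sfi} with $u=\lfloor n(x+y)\rfloor$ and $v=\lfloor nx\rfloor$, note that $u-v$ differs from $\lfloor ny\rfloor$ by at most $2d$ in each coordinate, and bound $\varphi(u-v)\leq \varphi(\lfloor ny\rfloor)+O(1)$ via the Lipschitz estimate; dividing by $n$ and letting $n\to\infty$ gives $\bar\varphi(x+y)\leq\bar\varphi(x)+\bar\varphi(y)$. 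Positive homogeneity and subadditivity together yield convexity on $\mathbb{R}^d$ in the standard way:
\[
\bar\varphi(\alpha x+(1-\alpha)y)\leq\bar\varphi(\alpha x)+\bar\varphi((1-\alpha)y)=\alpha\,\bar\varphi(x)+(1-\alpha)\,\bar\varphi(y),\qquad \alpha\in[0,1].
\]

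For uniform convergence on $\mathbb{S}^{d-1}$, compactness together with the asymptotic equi-Lipschitz bound and pointwise convergence on a countable dense subset suffices; a standard Arzel\`a--Ascoli argument completes the proof, continuity of $\bar\varphi$ being automatic from convexity and finiteness on $\mathbb{R}^d$. The only slightly delicate point throughout the argument is the careful handling of the $O(1)$ errors created by the floor function, which explains the role of the additive constant $C'$ in the Lipschitz estimate alongside the multiplicative one; each such error is dominated after division by $n$, so nothing substantive beyond subadditivity and linear growth is needed.
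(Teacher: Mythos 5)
Your argument is correct and follows essentially the same route as the paper: subadditivity of $\varphi$ (Lemma \ref{sfi}) plus the linear growth bound (\ref{fi<psi}) yield an approximate subadditivity of $n\mapsto\varphi(\lfloor nx\rfloor)$ and an (asymptotic) equi-Lipschitz estimate for $\bar\varphi_n$, from which Fekete gives the radial limits, convexity and homogeneity follow as you describe, and compactness of $\mathbb{S}^{d-1}$ upgrades to uniform convergence. The only cosmetic difference is that you run Fekete on the genuinely subadditive sequence $k\mapsto\varphi(kz)$ and transfer via the Lipschitz bound, whereas the paper works directly with the approximately subadditive sequence $n\mapsto\varphi(\lfloor nx\rfloor)$; your explicit tracking of the $O(1/n)$ floor-function errors is a minor sharpening of the paper's statement of uniform equicontinuity.
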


\begin{proof}
Given $x\in\mathbb{R}^{d}$ and $m,n\in\mathbb{N},$ by the previous lemma,
\begin{equation}
\varphi\left(  \left\lfloor \left(  n+m\right)  x\right\rfloor \right)
\leq\varphi\left(  \left\lfloor nx\right\rfloor \right)  +\varphi\left(
\left\lfloor \left(  n+m\right)  x\right\rfloor -\left\lfloor nx\right\rfloor
\right)  \ .
\end{equation}
Since $\left\vert \left(  \left\lfloor \left(  n+m\right)  x\right\rfloor
-\left\lfloor nx\right\rfloor \right)  -\left\lfloor mx\right\rfloor
\right\vert \leq d,$ it is possible by adding a finite number of open bonds to
construct from each $\omega\in\left\{  0<\left\vert \mathbf{C}%
_{\{0,\left\lfloor mx\right\rfloor \}}\left(  \omega\right)  \right\vert
<\infty\right\}  $ a%
\begin{equation}
\omega^{\prime}\in\left\{  0<\left\vert \mathbf{C}_{\{0,\left(  \left\lfloor
\left(  n+m\right)  x\right\rfloor -\left\lfloor nx\right\rfloor \right)
\}}\left(  \omega\right)  \right\vert <\infty\right\}  \ .
\end{equation}
Hence, there exists a constant $c_{1}=c_{1}\left(  d\right)  $ such that
\begin{equation}
\left\vert \overline{\partial}\mathbf{C}_{\{0,\left(  \left\lfloor \left(
n+m\right)  x\right\rfloor -\left\lfloor nx\right\rfloor \right)  \}}\left(
\omega^{\prime}\right)  \right\vert \leq\left\vert \overline{\partial
}\mathbf{C}_{\{0,\left\lfloor mx\right\rfloor \}}\left(  \omega\right)
\right\vert +c_{1}.
\end{equation}
Then,
\begin{equation}
\varphi\left(  \left\lfloor \left(  n+m\right)  x\right\rfloor \right)
\leq\varphi\left(  \left\lfloor nx\right\rfloor \right)  +\varphi\left(
\left\lfloor mx\right\rfloor \right)  +c_{1}\ , \label{gs2}%
\end{equation}
which imply the existence and the homogeneity of order one of $\bar{\varphi}.
$ The same argument shows that, for any $x,y\in\mathbb{R}^{d}$ and
$n\in\mathbb{N},$%
\begin{align}
\varphi\left(  \left\lfloor nx\right\rfloor \right)   &  \leq\varphi\left(
\left\lfloor ny\right\rfloor \right)  +\varphi\left(  \left\lfloor
nx\right\rfloor -\left\lfloor ny\right\rfloor \right) \label{gs3}\\
&  \leq\varphi\left(  \left\lfloor ny\right\rfloor \right)  +\varphi\left(
\left\lfloor n\left(  x-y\right)  \right\rfloor \right)  +c_{1}\ .\nonumber
\end{align}
Setting $x=\lambda x_{1}+\left(  1-\lambda\right)  x_{2},y=\lambda x_{1},$
with $x_{1},x_{2}\in\mathbb{R}^{d}$ and $\lambda\in\left(  0,1\right)  ,$
dividing by $n$ and taking the limit $n\rightarrow\infty$ we obtain the
convexity of $\bar{\varphi}.$ Moreover, by (\ref{gs3}) and (\ref{fi<psi}),
$\forall x,y\in\mathbb{R}^{d},n\in\mathbb{N},$ there exists a constant
$c_{1}^{\prime}=c_{1}^{\prime}\left(  d\right)  $ such that
\begin{equation}
\left\vert \bar{\varphi}_{n}\left(  x\right)  -\bar{\varphi}_{n}\left(
y\right)  \right\vert \leq c_{1}^{\prime}\left\Vert x-y\right\Vert \ .
\end{equation}
Hence, the collection $\left\{  \bar{\varphi}_{n}\right\}  _{n\in\mathbb{N}}$
is uniformly equicontinuous which, by the compactness of $\mathbb{S}^{d-1},$
implies that $\left\{  \bar{\varphi}_{n}\right\}  _{n\in\mathbb{N}}$ converges uniformly.
\end{proof}

\begin{lemma}
There exists $c_{2}=c_{2}\left(  d\right)  >1$ such that, for any
$x\in\mathbb{Z}^{d},$%
\begin{equation}
c_{2}^{-1}\leq\frac{\varphi\left(  x\right)  }{\psi\left(  x\right)  }\leq
c_{2}\ , \label{fi/psi}%
\end{equation}

\end{lemma}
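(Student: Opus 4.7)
The upper bound $\varphi(x)/\psi(x) \leq c_{2}$ is immediate from the preceding remark, which gave
\begin{equation}
\varphi(x) \leq 2(d-1)(\psi(x)+1) + 2.
\end{equation}
For any $x \in \mathbb{Z}^{d}\setminus\{0\}$ we have $\psi(x) \geq |x| \geq 1$, and hence $\varphi(x) \leq (4d-2)\psi(x)$, so the choice $c_{2}=4d-2$ suffices on this side.

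For the opposite inequality $\psi(x) \leq c_{2}\varphi(x)$, I would pick $\omega \in \mathbf{A}(x)$ attaining $\psi(x) = |E(\mathbf{C}_{\{0,x\}}(\omega))|$, write $\mathbf{C} := \mathbf{C}_{\{0,x\}}(\omega)$, and reduce the problem to the purely geometric estimate
\begin{equation}
|V(\overline{\mathbf{C}})| \leq C(d)\,\varphi(x).
\end{equation}
Once this is available, the elementary degree bound $|E(\mathbf{C})| \leq d\,|V(\mathbf{C})| \leq d\,|V(\overline{\mathbf{C}})|$ gives $\psi(x) \leq dC(d)\,\varphi(x)$, and $c_{2}$ can then be enlarged to cover both sides.

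To establish the geometric estimate, I would slice $\overline{\mathbf{C}}$ by the hyperplanes perpendicular to the coordinate direction $e_{j}$ for which $|x_{j}|=\max_{i}|x_{i}|$, so that $|x_{j}| \geq |x|/d$. Setting $\overline{T}_{k} := \overline{\mathbf{C}}\cap\{y : y_{j}=k\}$, the connectedness of $\overline{\mathbf{C}}$ together with $0,x\in \overline{\mathbf{C}}$ forces $\overline{T}_{k}$ to be non-empty for $k$ ranging over an interval of length at least $|x_{j}|+1$. Applying the $(d-1)$-dimensional isoperimetric inequality to each $\overline{T}_{k}$, noting that $\sum_{k}|\partial\overline{T}_{k}|$ is controlled by $|\overline{\partial}\mathbf{C}|$, and combining these local bounds with the lower bound on the number of non-empty slices by a H\"older/convexity argument should produce $\sum_{k}|\overline{T}_{k}|=|V(\overline{\mathbf{C}})|\leq C(d)\,\varphi(x)$.

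The hard part will be precisely this last step. The bare isoperimetric inequality in $\mathbb{Z}^{d}$ yields only the superlinear bound $|V(\overline{\mathbf{C}})| \leq C\,\varphi(x)^{d/(d-1)}$, which is too weak; one must genuinely use the constraint that $\overline{\mathbf{C}}$ contains the two far-apart points $0$ and $x$, which forces it to be elongated along the direction of $x$, so that in combination with the small-boundary condition it is constrained to have essentially bounded $(d-1)$-dimensional cross-section. This is where the hypothesis $d\geq 3$ is indispensable, in contrast to the two-dimensional situation treated in \cite{CIL}.
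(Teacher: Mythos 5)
The upper bound $\varphi(x)\le c_{2}\psi(x)$ is handled correctly and just as in the paper, directly from the remark preceding the lemma. For the lower bound the paper takes a much softer route than the one you propose: it introduces the auxiliary functional $\upsilon(x)$ (minimal number of unit cubes covering a minimizing cluster), observes that, like $\varphi$, it satisfies approximate subadditivity of the type established in Lemma~\ref{sfi} and Proposition~\ref{deffi-}, so that the limits $\bar\varphi$ and $\bar\upsilon$ exist and are both convex, homogeneous of order one, hence equivalent norms on $\mathbb{R}^{d}$; the bound $\psi\le\upsilon\le c_{2}'\varphi$ then follows for large $\|x\|$ from $\upsilon/\|x\|\to\bar\upsilon(\hat x)$ and $\varphi/\|x\|\to\bar\varphi(\hat x)$ and for small $\|x\|$ by finiteness. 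This reuses machinery already built and avoids any geometric estimate on slices.

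Your slicing proposal, by contrast, leaves the crucial step unproved, and I do not think the ingredients you name can close it. The $(d-1)$-dimensional isoperimetric inequality applied to the cross-sections $\overline T_{k}$ is the wrong tool: it bounds $|\overline T_{k}|$ by $|\partial\overline T_{k}|^{(d-1)/(d-2)}$, and no H\"older or convexity argument with the constraint $\sum_{k}|\partial\overline T_{k}|\lesssim\varphi(x)$ can convert this superlinear relation into a linear one, which you yourself acknowledge. What actually controls a fat slice is the set of plaquettes \emph{perpendicular} to the slicing direction $e_{j}$: the boundary receives a contribution of order $|\overline T_{k}\triangle\overline T_{k+1}|$ between consecutive slices, so the sequence $|\overline T_{k}|$ has total variation at most $\varphi(x)$; combined with the lower bound $|\partial\overline T_{k}|\ge 2(d-1)$ for every nonempty slice, this is what pins down $\sum_{k}|\overline T_{k}|\lesssim\varphi(x)$. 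Your sketch never invokes these transverse plaquettes, so the "hard part" is not merely unproved but aimed in the wrong direction. Finally, the closing remark that $d\ge3$ is indispensable here is mistaken: the lemma is a purely combinatorial comparison of $\varphi$ and $\psi$ and holds, by the paper's subadditivity argument, for all $d\ge2$; the restriction $d\ge3$ in the paper is needed for the Ornstein--Zernike asymptotics and is unrelated to the situation in \cite{CIL}, where the finite-connection asymptotics are genuinely different but this norm-equivalence lemma would still be true.
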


\begin{proof}
For any $x\in\mathbb{Z}^{d},$ (\ref{fi<psi}) implies $\varphi\left(  x\right)
\leq\left(  2d+1\right)  \psi\left(  x\right)  .$

On the other hand, for any $x\in\mathbb{Z}^{d},$ let
\begin{equation}
\Omega_{x}:=\left\{  \omega\in\left\{  0<\left\vert \mathbf{C}_{\{0,x\}}%
\left(  \omega\right)  \right\vert <\infty\right\}  :\left\vert \overline
{\partial}\mathbf{C}_{\{0,x\}}\left(  \omega\right)  \right\vert
=\varphi\left(  x\right)  ,\ \left\vert E\left(  \mathbf{C}_{\{0,x\}}\left(
\omega\right)  \right)  \right\vert =\psi\left(  x\right)  \right\}  \ .
\end{equation}
Given $\omega\in\Omega_{x},\ \mathbf{C}_{\{0,x\}}\left(  \omega\right)  $ is
contained, as a subset of $\mathbb{R}^{d},$ in the compact connected set
$\bigcup_{x\in\mathbf{C}_{\{0,x\}}\left(  \omega\right)  }x^{\ast}.$ The
function
\begin{equation}
\mathbb{Z}^{d}\ni x\longmapsto\upsilon\left(  x\right)  :=\min_{\omega
\in\Omega_{x}}\left\vert \bigcup_{x\in\mathbf{C}_{\{0,x\}}\left(
\omega\right)  }x^{\ast}\right\vert \in\mathbb{N}%
\end{equation}
is easily seen to satisfy subadditive type inequalities of the kind
(\ref{gs1}) and (\ref{gs2}). Hence, arguing as in the previous proposition,
the sequence $\left\{  \bar{\upsilon}_{n}\right\}  _{n\in\mathbb{N}},$ where,
$\forall n\in\mathbb{N},$%
\begin{equation}
\mathbb{R}^{d}\ni x\longmapsto\bar{\upsilon}_{n}\left(  x\right)
:=\frac{\upsilon\left(  \left\lfloor nx\right\rfloor \right)  }{n}%
\in\mathbb{R}^{+}\ ,
\end{equation}
converges to $\bar{\upsilon}$ which can be proved to be a convex,
homogeneous-of-order-one function. Both $\bar{\varphi}$ and $\bar{\upsilon}$
are equivalent norms in $\mathbb{R}^{d},$ therefore, there exists a positive
constant $c_{2}^{\prime}=c_{2}^{\prime}\left(  d\right)  $ such that
$\bar{\upsilon}\leq c_{2}^{\prime}\bar{\varphi}.$ Moreover,
\begin{equation}
\lim_{\left\Vert x\right\Vert \rightarrow\infty}\frac{\upsilon\left(
x\right)  }{\left\Vert x\right\Vert }=\bar{\upsilon}\left(  \hat{x}\right)
\;;\;\lim_{\left\Vert x\right\Vert \rightarrow\infty}\frac{\varphi\left(
x\right)  }{\left\Vert x\right\Vert }=\bar{\varphi}\left(  \hat{x}\right)  \ .
\end{equation}
Hence, for any $\varepsilon>0,$ there exists $R_{\varepsilon}>0$ such that,
$\forall x\in\left(  R_{\varepsilon}B\right)  ^{c}\cap\mathbb{Z}^{d}%
,$\linebreak$\frac{\upsilon\left(  x\right)  }{\left\Vert x\right\Vert }%
\leq\bar{\upsilon}\left(  \hat{x}\right)  \left(  1+\varepsilon\right)  .$
Then,
\begin{equation}
\psi\left(  x\right)  \leq\upsilon\left(  x\right)  \leq\left(  1+\varepsilon
\right)  \left\Vert x\right\Vert \bar{\upsilon}\left(  \hat{x}\right)
\leq\left(  1+\varepsilon\right)  \left\Vert x\right\Vert c_{2}^{\prime}%
\bar{\varphi}\left(  \hat{x}\right)  \ .
\end{equation}

\end{proof}

\begin{proposition}
\label{P1}There exists a constant $c_{3}=c_{3}\left(  d\right)  >1$ such that,
for any $p\in\left(  1-\frac{1}{c_{3}},1\right)  $ and any $\delta
>\delta^{\ast},$ with
\begin{gather}
\delta^{\ast}=\delta^{\ast}\left(  p,d\right)  :=\frac{\log\frac{c_{3}\left(
d\right)  }{p^{c_{2}\left(  d\right)  }}}{\log\frac{1}{c_{3}\left(  d\right)
\left(  1-p\right)  }}\ ,\label{d*}\\
\mathbb{P}_{p}\left(  \{\left\vert \overline{\partial}\mathbf{C}%
_{\{0,x\}}\right\vert \geq\left(  1+\delta\right)  \varphi\left(  x\right)
\}|\left\{  \left\vert \mathbf{C}_{\{0,x\}}\right\vert <\infty\right\}
\right)  \leq\frac{1}{1-c_{3}\left(  1-p\right)  }\left[  \frac{c_{3}%
^{1+\delta}\left(  1-p\right)  ^{\delta}}{p^{c_{2}}}\right]  ^{\varphi\left(
x\right)  }\ . \label{l1}%
\end{gather}

\end{proposition}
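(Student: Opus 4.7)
The plan is to bound the numerator $\mathbb{P}_p\bigl(|\overline{\partial}\mathbf{C}_{\{0,x\}}|\geq (1+\delta)\varphi(x)\bigr)$ from above by a Peierls-type enumeration of possible external boundaries, and the denominator $\mathbb{P}_p\bigl(|\mathbf{C}_{\{0,x\}}|<\infty\bigr)$ from below by exhibiting a single optimal configuration. Taking the ratio will then yield (\ref{l1}), with $\delta^{\ast}$ emerging as exactly the threshold making the base of the resulting geometric series strictly smaller than one.

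For the numerator, I would decompose $\{|\overline{\partial}\mathbf{C}_{\{0,x\}}|=k\}$ as a disjoint union over the possible realisations $S$ of the external boundary, viewed as a set of plaquettes in the dual graph $\mathfrak{G}$. Because $\overline{\mathbf{C}_{\{0,x\}}}$ is the cluster with all its finite holes filled in, its dual boundary $\mathbf{S}_{\{0,x\}}$ is a connected closed hyper-surface of $k$ plaquettes in $\mathfrak{G}$ that separates $\{0,x\}$ from infinity. For each such $S$, the event $\{\mathbf{S}_{\{0,x\}}=S\}$ forces the $k$ bonds dual to $S$ to be closed, so $\mathbb{P}_p(\mathbf{S}_{\{0,x\}}=S)\leq (1-p)^k$. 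A standard anchor-and-grow Peierls estimate --- fix an anchor plaquette, e.g.\ the one of $S$ closest to $0$ along the positive $x_1$-axis (at most polynomially many choices in $k$), then enumerate connected sub-complexes of size $k$ in the bounded-degree graph $\mathfrak{G}$ grown from this anchor --- produces a bound of the form $c_3^k$ for a constant $c_3=c_3(d)>1$ which absorbs the polynomial prefactor. Under the hypothesis $c_3(1-p)<1$, summing the geometric tail yields
\begin{equation}
\mathbb{P}_p\bigl(|\overline{\partial}\mathbf{C}_{\{0,x\}}|\geq (1+\delta)\varphi(x)\bigr)\leq \frac{\left[c_3(1-p)\right]^{(1+\delta)\varphi(x)}}{1-c_3(1-p)}.
\end{equation}

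For the denominator, I would pick a configuration $\omega_0\in\mathbf{A}(x)$ simultaneously attaining $|\overline{\partial}\mathbf{C}_{\{0,x\}}(\omega_0)|=\varphi(x)$ and $|E(\mathbf{C}_{\{0,x\}}(\omega_0))|=\psi(x)$, and lower-bound $\mathbb{P}_p(|\mathbf{C}_{\{0,x\}}|<\infty)$ by the probability of the elementary product event that the $\psi(x)$ internal edges of $\mathbf{C}_{\{0,x\}}(\omega_0)$ are all open and the $\varphi(x)$ edges of $\overline{\partial}\mathbf{C}_{\{0,x\}}(\omega_0)$ are all closed (other bonds being free). This event forces $\mathbf{C}_{\{0,x\}}$ to coincide with $V(\mathbf{C}_{\{0,x\}}(\omega_0))$, so the lower bound $p^{\psi(x)}(1-p)^{\varphi(x)}\geq p^{c_2\varphi(x)}(1-p)^{\varphi(x)}$ follows via (\ref{fi/psi}). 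Dividing the upper bound for the numerator by this lower bound and cancelling the common factor $(1-p)^{\varphi(x)}$ reproduces exactly the right-hand side of (\ref{l1}); the requirement $\delta>\delta^{\ast}$ is then equivalent to $c_3^{1+\delta}(1-p)^{\delta}<p^{c_2}$, i.e.\ to the base of the resulting exponential being strictly less than $1$.

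The main obstacle is the combinatorial step producing the bound $c_3^k$ on the number of admissible boundaries of size $k$ enclosing $0$. One has to verify that $\mathbf{S}_{\{0,x\}}$ really is connected in $\mathfrak{G}$, which is where the definition of $\overline{\mathbf{C}}$ as the cluster with all finite holes filled in (and the use of $\overline{\partial}$ rather than plain $\partial$) becomes essential, so that a standard anchor-and-grow enumeration in the bounded-degree graph $\mathfrak{G}$ yields $c^k$ with $c=c(d)$; one then absorbs the polynomial factor from the anchor choice into $c_3$. Once $c_3$ is chosen so large that both the Peierls count holds and $c_3(1-p)<1$ under the hypothesis $p>1-1/c_3$, the remainder is a routine geometric-series manipulation.
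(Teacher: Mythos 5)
Your proposal follows essentially the same route as the paper: bound $\mathbb{P}_p(\mathbf{A}_k(x))\leq c_3^k(1-p)^k$ by a Peierls count of connected dual surfaces of $k$ plaquettes (the paper does this via minimal spanning trees and the walk-visits-each-edge-twice argument rather than your anchor-and-grow phrasing, but it is the same count), lower-bound the denominator by the elementary event built from a configuration realising both $\varphi(x)$ and $\psi(x)$ to get $p^{\psi(x)}(1-p)^{\varphi(x)}\geq[p^{c_2}(1-p)]^{\varphi(x)}$ via (\ref{fi/psi}), then sum the geometric tail. Your explicit anchoring step is in fact a small improvement in rigour over the paper's count of $|\mathbb{G}_k|$, but the structure and ingredients are identical.
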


\begin{proof}
Let $\mathcal{L}_{0}^{d}$ be the collection of subgraphs of $\mathbb{L}^{d}$
of finite order. Setting $\mathbb{G}_{0}^{d}:=\left\{  G\in\mathcal{L}_{0}%
^{d}:\partial G=\overline{\partial}G\right\}  $ and denoting by $\mathbb{G}%
_{c}^{d}$ the collection of connected elements of $\mathbb{G}_{0}^{d},$ we
define, for any $k\geq2d,$ the (possibly empty) collection of lattice's
subset
\begin{equation}
\mathbb{G}_{k}:=\left\{  \mathcal{G}\subset\mathfrak{G}:\mathcal{G}=G\left[
\left(  \partial G^{\prime}\right)  ^{\ast}\right]  \ ,\ G^{\prime}%
\in\mathbb{G}_{c}^{d}\ ;\ \left\vert V\left(  \mathcal{G}\right)  \right\vert
=k\right\}  \ .
\end{equation}
Since
\begin{equation}
\{0<\left\vert \mathbf{C}_{\{0,x\}}\right\vert <\infty\}=%
{\displaystyle\bigvee\limits_{k\geq\varphi\left(  x\right)  }}
\mathbf{A}_{k}\left(  x\right)  \ ,
\end{equation}
then,
\begin{equation}
\mathbb{P}_{p}\left(  \mathbf{A}_{k}\left(  x\right)  \right)  =\left(
1-p\right)  ^{k}\sum_{\mathcal{G}\in\mathbb{G}_{k}}\mathbb{P}_{p}\left\{
\omega\in\Omega:G\left[  \mathbf{S}_{\{0,x\}}\left(  \omega\right)  \right]
=\mathcal{G}\right\}
\end{equation}
and we get
\begin{equation}
\mathbb{P}_{p}\left(  \mathbf{A}_{k}\left(  x\right)  \right)  \leq\left\vert
\mathbb{G}_{k}\right\vert \left(  1-p\right)  ^{k}\ .
\end{equation}
We can choose for each $\mathcal{G}\in\mathbb{G}_{k}$ a minimal spanning tree
$T_{\mathcal{G}}$ and consider the collection of graphs
\begin{equation}
\mathbb{T}_{k}:=\left\{  T_{\mathcal{G}}:\mathcal{G}\in\mathbb{G}_{k}\right\}
\ .
\end{equation}
Since given a connected tree there is a walk passing only twice through any
edge of the graph, there exists a constant $c_{3}=c_{3}\left(  d\right)  >1$
such that $\left\vert \mathbb{G}_{k}\right\vert =c_{3}^{k}.$ Therefore, by
(\ref{fi/psi}),
\begin{align}
\mathbb{P}_{p}\{0  &  <\left\vert \mathbf{C}_{\{0,x\}}\right\vert
<\infty\}=\sum_{k\geq\varphi\left(  x\right)  }\mathbb{P}_{p}\left(
\mathbf{A}_{k}\left(  x\right)  \right)  \geq p^{\psi\left(  x\right)
}\left(  1-p\right)  ^{\varphi\left(  x\right)  }\label{lbP}\\
&  \geq\left[  p^{c_{2}}\left(  1-p\right)  \right]  ^{\varphi\left(
x\right)  }\ .\nonumber
\end{align}
Therefore, $\forall p\in\left(  1-\frac{1}{c_{3}},1\right)  ,$ choosing
$\delta^{\ast}$ as in (\ref{d*}), $\forall\delta>\delta^{\ast},$ we have
\begin{gather}
\mathbb{P}_{p}\left(  \{\left\vert \overline{\partial}\mathbf{C}%
_{\{0,x\}}\right\vert \geq\left(  1+\delta\right)  \varphi\left(  x\right)
\}|\left\{  0<\left\vert \mathbf{C}_{\{0,x\}}\right\vert <\infty\right\}
\right)  \leq\frac{\sum_{k\geq\left(  1+\delta\right)  \varphi\left(
x\right)  }\mathbb{P}_{p}\left(  \mathbf{A}_{k}\left(  x\right)  \right)
}{\left[  p^{c_{2}}\left(  1-p\right)  \right]  ^{\varphi\left(  x\right)  }%
}\leq\\
\frac{\sum_{k\geq\left(  1+\delta\right)  \varphi\left(  x\right)  }c_{3}%
^{k}\left(  1-p\right)  ^{k}}{\left[  p^{c_{2}}\left(  1-p\right)  \right]
^{\varphi\left(  x\right)  }}=\frac{1}{1-c_{3}\left(  1-p\right)  }\left[
\frac{c_{3}^{1+\delta}\left(  1-p\right)  ^{\delta}}{p^{c_{2}}}\right]
^{\varphi\left(  x\right)  }.\nonumber
\end{gather}

\end{proof}

\subsection{Renewal structure of connectivities}

Given $t\in\mathbb{S}^{d-1}$ we define
\begin{equation}
\mathcal{H}_{y}^{t}:=\left\{  x\in\mathbb{R}^{d}:\left\langle t,x\right\rangle
=\left\langle t,y\right\rangle \right\}  \quad y\in\mathbb{R}^{d}%
\end{equation}
to be the $\left(  d-1\right)  $-dimensional hyperplane in $\mathbb{R}^{d}$
orthogonal to the vector $t$ passing through a point $y\in\mathbb{R}^{d}$ and
the corresponding half-spaces
\begin{align}
\mathcal{H}_{y}^{t,-}  &  :=\left\{  x\in\mathbb{R}^{d}:\left\langle
t,x\right\rangle \leq\left\langle t,y\right\rangle \right\}  \ ,\\
\mathcal{H}_{y}^{t,+}  &  :=\left\{  x\in\mathbb{R}^{d}:\left\langle
t,x\right\rangle \geq\left\langle t,y\right\rangle \right\}  \ .
\end{align}
Let $t\in\mathbb{S}^{d}.$ Given two points $x,y\in\mathbb{Z}^{d}$ such that
$\left\langle x,t\right\rangle \leq\left\langle y,t\right\rangle ,$ we denote
by $\mathbf{C}_{\{x,y\}}^{t}$ the cluster of $x$ and $y$ inside the strip
$\mathcal{S}_{\left\{  x,y\right\}  }^{t}:=\mathcal{H}_{x}^{t,+}%
\cap\mathcal{H}_{y}^{t,-}$ provided it exists.

Let $u$ be the first of the unit vectors in the direction of the coordinate
axis $u_{1},..,u_{d}$ such that $\left\langle t,u\right\rangle $ is maximal

\begin{definition}
Given $t\in\mathbb{S}^{d-1},$ let $x,y\in\mathbb{Z}^{d}$ such that
$\left\langle x,t\right\rangle \leq\left\langle y,t\right\rangle ,$ be
connected in $\mathcal{S}_{\left\{  x,y\right\}  }^{t}.$ The points
$b\in\mathbf{C}_{\{x,y\}}^{t}$ such that:

\begin{enumerate}
\item $\left\langle t,x+u\right\rangle \leq\left\langle t,b\right\rangle
\leq\left\langle t,y-u\right\rangle ;$

\item $\mathbf{C}_{\{x,y\}}^{t}\cap\mathcal{S}_{\left\{  b-u,b+u\right\}
}^{t}=\left\{  b-u,b,b+u\right\}  ;$
\end{enumerate}

are said to be $t$\emph{-break points} of $\mathbf{C}_{\{x,y\}}.$ The
collection of such points, which we remark is a totally ordered set with
respect to the scalar product with $t,$ will be denoted by $\mathbf{B}%
^{t}\left(  x,y\right)  .$
\end{definition}

\begin{definition}
Given $t\in\mathbb{S}^{d-1},$ let $x,y\in\mathbb{Z}^{d}$ such that
$\left\langle x,t\right\rangle \leq\left\langle y,t\right\rangle ,$ be
connected in $\mathcal{S}_{\left\{  x,y\right\}  }^{t}.$ An edge $\{b,b+u\}$
such that $b,b+u\in\mathbf{B}^{t}\left(  x,y\right)  $ is called
$t$\emph{-bond} of $\mathbf{C}_{\{x,y\}}.$ The collection of such edges will
be denoted by $\mathbf{E}^{t}\left(  x,y\right)  ,$ while $\mathbf{B}_{e}%
^{t}\left(  x,y\right)  \subset\mathbf{B}^{t}\left(  x,y\right)  $ will denote
the subcollection of $t$-break points\emph{\ }$b$ of $\mathbf{C}_{\{x,y\}}$
such that the edge $\{b,b+u\}\in\mathbf{E}^{t}\left(  x,y\right)  .$
\end{definition}

\begin{notation}
In the sequel we will omit the dependence on $x$ in the notation of the random
sets $\mathbf{B}^{t}\left(  x,y\right)  ,\ \mathbf{B}_{e}^{t}\left(
x,y\right)  $ and $\mathbf{E}^{t}\left(  x,y\right)  $ if such point is taken
to be the origin.
\end{notation}

\begin{definition}
Given $t\in\mathbb{S}^{d-1},$ let $x,y\in\mathbb{Z}^{d}$ such that
$\left\langle t,x\right\rangle \leq\left\langle t,y\right\rangle $ be
connected.$\ x,y\in\mathbb{Z}^{d}$ are said to be $h_{t}$-connected if

\begin{enumerate}
\item $x$ and $y$ are connected in $\mathcal{S}_{\{x,y\}}^{t}$ and $\left\vert
\mathbf{C}_{\{x,y\}}^{t}\right\vert <\infty;$

\item $x+u,y-u\in\mathbf{B}^{t}\left(  x,y\right)  .$
\end{enumerate}
\end{definition}

Moreover, denoting by $\left\{  x\overset{h_{t}}{\longleftrightarrow
y}\right\}  $ the event that $x$ and $y$ are $h_{t}$-connected, we set
\begin{equation}
h_{t}^{\left(  p\right)  }\left(  x,y\right)  :=\mathbb{P}_{p}\left\{
x\overset{h_{t}}{\longleftrightarrow y}\right\}  \ .
\end{equation}

Notice that, by translation invariance, $h_{t}^{\left(  p\right)  }\left(
x,y\right)  =h_{t}^{\left(  p\right)  }\left(  y-x,0\right)  $ so in the
sequel we will denote it simply by $h_{t}^{\left(  p\right)  }\left(
y-x\right)  .$ We also define by convention $h_{t}^{\left(  p\right)  }\left(
0\right)  =1.$

\begin{definition}
Let $t\in\mathbb{S}^{d-1}$ and $x,y\in\mathbb{Z}^{d}$ be $h_{t}$-connected. If
$\mathbf{B}^{t}\left(  x+u,y-u\right)  $ is empty, then $x$ and $y$ are said
to be $f_{t}$-connected and the corresponding event is denoted by $\left\{
x\overset{f_{t}}{\longleftrightarrow y}\right\}  .$ We then set\
\begin{equation}
f_{t}^{\left(  p\right)  }\left(  y-x\right)  :=\mathbb{P}_{p}\left\{
x\overset{f_{t}}{\longleftrightarrow y}\right\}  \ .
\end{equation}

\end{definition}

We define by convention $f_{t}^{\left(  p\right)  }\left(  0\right)  =0.$

\begin{definition}
Given $t\in\mathbb{S}^{d-1},$ let $x,y\in\mathbb{Z}^{d}$ such that
$\left\langle t,x\right\rangle \leq\left\langle t,y\right\rangle $ be
connected. Then:

\begin{enumerate}
\item $x,y$ are called $\bar{h}_{t}$-connected and the corresponding event is
denoted by $\left\{  x\overset{\bar{h}_{t}}{\longleftrightarrow y}\right\}  ,$
if $\mathbf{C}_{\{x,y\}}\cap\mathcal{S}_{\left\{  y-u,y\right\}  }%
^{t}=\left\{  y-u,y\right\}  $ and $\left\vert \mathbf{C}_{\{x,y\}}%
\cap\mathcal{H}_{y}^{t,-}\right\vert <\infty.$

\item $x,y$ are called $\bar{f}_{t}$-connected and the corresponding event is
denoted by $\left\{  x\overset{\bar{f}_{t}}{\longleftrightarrow y}\right\}  ,$
if they are $\bar{h}_{t}$-connected and $\mathbf{B}^{t}\left(  x,y\right)
=\emptyset.$
\end{enumerate}
\end{definition}

\begin{definition}
Given $t\in\mathbb{S}^{d-1},$ let $x,y\in\mathbb{Z}^{d}$ such that
$\left\langle t,x\right\rangle \leq\left\langle t,y\right\rangle $ be
connected. Then:

\begin{enumerate}
\item $x,y$ are called $\tilde{h}_{t}$-connected and the corresponding event
is denoted by $\left\{  x\overset{\tilde{h}_{t}}{\longleftrightarrow
y}\right\}  ,$ if:

\begin{enumerate}
\item $\mathbf{C}_{\left\{  x,y\right\}  }\cap\mathcal{S}_{\{x,x+u\}}%
^{t}=\{x,x+u\};$

\item $\left\vert \mathbf{C}_{\{x,y\}}\cap\mathcal{H}_{x}^{t,+}\right\vert
<\infty.$
\end{enumerate}

\item $x,y$ are called $\tilde{f}_{t}$-connected and the corresponding event
is denoted by $\left\{  x\overset{\tilde{f}_{t}}{\longleftrightarrow
y}\right\}  ,$ if they are $\tilde{h}_{t}$-connected and $\mathbf{B}%
^{t}\left(  x,y\right)  =\emptyset.$
\end{enumerate}
\end{definition}

The functions $\bar{h}_{t}^{\left(  p\right)  }\left(  x,y\right)
:=\mathbb{P}_{p}\left\{  x\overset{\bar{h}_{t}}{\longleftrightarrow
y}\right\}  $ and $\tilde{h}_{t}^{\left(  p\right)  }\left(  x,y\right)
:=\mathbb{P}_{p}\left\{  x\overset{\tilde{h}_{t}}{\longleftrightarrow
y}\right\}  $ are translation invariant.

Denoting by $g_{t}^{\left(  p\right)  }\left(  x,y\right)  ,$ for
$t\in\mathbb{S}^{d-1},$ the probability of the event
\begin{equation}
\left\{  x\overset{g_{t}}{\longleftrightarrow}y\right\}  :=\left\{
x\longleftrightarrow y,\ \left\vert \mathbf{C}_{\{x,y\}}\right\vert
<\infty,~\left\vert \mathbf{B}_{e}^{t}\left(  x,y\right)  \right\vert
\leq1\right\}  \ , \label{gcon}%
\end{equation}
which is also translation invariant, we obtain
\begin{gather}
\mathbb{P}_{p}\{0\longleftrightarrow x\ ,\ \left\vert \mathbf{C}%
_{\{0,x\}}\right\vert <\infty\}=g_{t}^{\left(  p\right)  }\left(  x\right)
+\sum_{z_{1},z_{2}\in\mathbb{Z}^{d}}\bar{f}_{t}^{\left(  p\right)  }\left(
z_{1}\right)  h_{t}^{\left(  p\right)  }\left(  z_{2}-z_{1}\right)  \tilde
{f}_{t}^{\left(  p\right)  }\left(  x-z_{2}\right)  \ ,\label{renh1}\\
h_{t}^{\left(  p\right)  }\left(  x\right)  =\sum_{z\in\mathbb{Z}^{d}}%
f_{t}^{\left(  p\right)  }\left(  z\right)  h_{t}^{\left(  p\right)  }\left(
x-z\right)  \ . \label{renh}%
\end{gather}

\begin{proposition}
Given $t\in\mathbb{S}^{d-1},$ for any $p\in\left(  0,1\right)  $ and
$x\in\mathbb{R}^{d}$ such that $\left\langle t,x\right\rangle >0,$%
\begin{equation}
\tau_{p}^{t}\left(  x\right)  :=-\lim_{n\rightarrow\infty}\frac{1}{n}\log
h_{t}^{\left(  p\right)  }\left(  \left\lfloor nx\right\rfloor \right)
\label{tau}%
\end{equation}
exists and is a convex and homogeneous-of-order-one function on $\mathbb{R}%
^{d}.$ Moreover, for\linebreak$p\in\left(  1-\frac{1}{c_{3}},1\right)  ,$%
\begin{equation}
\tau_{p}^{t}\left(  x\right)  \geq\bar{\varphi}\left(  x\right)  \log\frac
{1}{c_{3}\left(  1-p\right)  }\ . \label{tau>fi}%
\end{equation}

\end{proposition}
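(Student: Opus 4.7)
The proposition splits into two essentially independent parts: the existence of $\tau_{p}^{t}$ with its convexity and homogeneity, and the lower bound (\ref{tau>fi}).

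For the first part the key step is supermultiplicativity: for $x,y\in\mathbb{Z}^{d}$ with $\left\langle t,x\right\rangle ,\left\langle t,y\right\rangle >0$ I plan to prove
\[
h_{t}^{\left(p\right)}(x+y)\ \geq\ h_{t}^{\left(p\right)}(x)\,h_{t}^{\left(p\right)}(y)
\]
by a gluing argument. The event $\left\{ 0\overset{h_{t}}{\longleftrightarrow}x+y\right\} $ is implied by $\left\{ 0\overset{h_{t}}{\longleftrightarrow}x\right\} \cap\left\{ x\overset{h_{t}}{\longleftrightarrow}x+y\right\} $: the break-point requirement at $x-u$ coming from the first event and at $x+u$ coming from the second turns $x$ itself into a $t$-break point of the union cluster, which is moreover finite since both strip clusters are. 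The two events depend on bonds in the strips $\mathcal{S}_{\{0,x\}}^{t}$ and $\mathcal{S}_{\{x,x+y\}}^{t}$ respectively, whose overlap is contained in $\mathcal{H}_{x}^{t}$; the break-point structure forces the strip cluster to meet $\mathcal{H}_{x}^{t}$ only at $x$, so both events impose the same condition (closure) on the bonds incident to $x$ inside $\mathcal{H}_{x}^{t}$, and on no other bonds of $\mathcal{H}_{x}^{t}$. Conditioning on the shared bonds then yields independence up to a factor $\geq 1$, producing the inequality above. Setting $a_{n}:=-\log h_{t}^{\left(p\right)}(\lfloor nx\rfloor)$ and absorbing the rounding error $|\lfloor(n+m)x\rfloor-\lfloor nx\rfloor-\lfloor mx\rfloor|\leq d$ by opening a bounded number of additional bonds (as in Proposition \ref{deffi-}), Fekete's subadditive lemma gives $\tau_{p}^{t}(x):=\lim_{n}a_{n}/n$. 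Homogeneity of order one follows from $\lfloor n\lambda x\rfloor/n\rightarrow\lambda x$ for $\lambda>0$, and convexity follows by applying the same supermultiplicativity with $x=\lambda x_{1}+\left(1-\lambda\right)x_{2}$ and passing to the limit, exactly as in the proof of Proposition \ref{deffi-}.

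For the lower bound (\ref{tau>fi}), the event $\left\{ 0\overset{h_{t}}{\longleftrightarrow}\lfloor nx\rfloor\right\} $ forces a finite open strip cluster joining $0$ and $\lfloor nx\rfloor$ inside $\mathcal{S}_{\{0,\lfloor nx\rfloor\}}^{t}$. Completing this cluster by a $d$-dependent bounded number of forced open bonds in $\mathcal{H}_{\lfloor nx\rfloor}^{t,+}\cup\mathcal{H}_{0}^{t,-}$ yields a finite full cluster containing $0$ and $\lfloor nx\rfloor$, so its plaquette external boundary has size at least $\varphi(\lfloor nx\rfloor)-O(1)$. The spanning-tree counting argument of Proposition \ref{P1} bounds the number of such boundaries of size $k$ by $c_{3}^{k}$, whence
\[
h_{t}^{\left(p\right)}(\lfloor nx\rfloor)\ \leq\ \sum_{k\geq\varphi(\lfloor nx\rfloor)-O(1)}c_{3}^{k}(1-p)^{k}\ \leq\ \frac{(c_{3}(1-p))^{\varphi(\lfloor nx\rfloor)-O(1)}}{1-c_{3}(1-p)}\ .
\]
Taking $-n^{-1}\log$ and using $\varphi(\lfloor nx\rfloor)/n\rightarrow\bar{\varphi}(x)$ from Proposition \ref{deffi-} then yields (\ref{tau>fi}).

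The main obstacle is the supermultiplicativity. Because the $h_{t}$-event contains the non-monotone requirement that the strip cluster be finite, FKG does not apply and the independence of the two half-events must be checked by hand. The break-point bottleneck is precisely what makes the surgery work, by reducing the overlap between the two half-events to a bounded set of bonds near $x$ on which both events impose identical constraints. Making this surgery rigorous and checking that the concatenated configuration really is an $h_{t}$-configuration for $x+y$ is the technical core of the proof.
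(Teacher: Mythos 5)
Your proof follows the same two-step strategy as the paper: supermultiplicativity from gluing at break points plus Fekete's lemma for the existence and properties of $\tau_p^t$, and the external-boundary counting of Proposition~\ref{P1} for (\ref{tau>fi}). The paper states the supermultiplicative inequality $h_t^{(p)}(y-x)\geq h_t^{(p)}(z-x)\,h_t^{(p)}(y-z)$ for lattice points $z$ in the strip $\mathcal{S}_{\{x+u,y-u\}}^t$, citing \cite{ACC} and the definition of $h_t$; your reconstruction of the gluing, and the observation that the only overlap between the two half-events is a bounded set of bonds around the gluing site on which both events impose the same closure constraint (so conditioning gives a factor $1/\mathbb{P}_p(B)\geq1$), is exactly the right way to make this precise. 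Two minor departures: the paper first takes $x\in\mathbb{Q}^d$ and multiplies by a $k$ making $kx$ a lattice point, so no floor error arises, while you absorb the $O(d)$ rounding directly; and for (\ref{tau>fi}) the paper simply writes $h_t^{(p)}(\lfloor nx\rfloor)\leq\mathbb{P}_p\{0<|\mathbf{C}_{\{0,\lfloor nx\rfloor\}}|<\infty\}$, whereas you carry out a bounded-cost surgery, which is in fact the more careful argument since the $h_t$-event only controls bonds inside the strip and does not by itself force the full cluster to be finite. One wording slip there: the surgery should \emph{close} (not ``force open'') the bounded number of bonds joining $0$ to $\mathcal{H}_0^{t,-}$ and $\lfloor nx\rfloor$ to $\mathcal{H}_{\lfloor nx\rfloor}^{t,+}$; since the break-point condition ensures the strip cluster meets each bounding hyperplane only at the single vertex $0$ (resp.\ $\lfloor nx\rfloor$), this set of bonds has cardinality $O(d)$, so the surgery costs a factor uniform in $n$ and does not affect the exponential rate.
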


\begin{proof}
We proceed as in \cite{ACC}. If $x,y\in\mathbb{Z}^{d}$ are $h_{t}$-connected
and $\mathcal{S}_{\{x+u,y-u\}}^{t}\cap\mathbb{Z}^{d}\neq\varnothing,$ by the
definition of the $h_{t}$-connection, for any $z\in\mathcal{S}_{\{x+u,y-u\}}%
^{t}$ and $p\in\left(  0,1\right)  ,$%
\begin{equation}
h_{t}^{\left(  p\right)  }\left(  y-x\right)  \geq h_{t}^{\left(  p\right)
}\left(  z-x\right)  h_{t}^{\left(  p\right)  }\left(  y-z\right)  \ .
\label{superh}%
\end{equation}
Then, for any $x\in\mathbb{Q}^{d}$ and $n,m\in\mathbb{N},$ if $k\in\mathbb{N}
$ is such that $kx\in\mathbb{Z}^{d},$ we have
\begin{align}
\log h_{t}^{\left(  p\right)  }\left(  \left(  n+m\right)  kx\right)   &
\geq\log h_{t}^{\left(  p\right)  }\left(  nkx\right)  +\log h_{t}^{\left(
p\right)  }\left(  \left(  n+m\right)  kx-nkx\right) \\
&  =\log h_{t}^{\left(  p\right)  }\left(  nkx\right)  +\log h_{t}^{\left(
p\right)  }\left(  mkx\right)  \ ,\nonumber
\end{align}
which imply that, for any $x\in\mathbb{Q}^{d},$ the limit $\tau_{p}^{t}$ in
(\ref{tau}) exists and has the property
\begin{equation}
\tau_{p}^{t}\left(  \lambda x\right)  =\left\vert \lambda\right\vert \tau
_{p}^{t}\left(  x\right)  \ ,\;\lambda\in\mathbb{Q}\ .
\end{equation}
Moreover, given $x,y\in\mathbb{Q}^{d},\lambda\in\mathbb{Q\cap}\left[
0,1\right]  $ and $n\in\mathbb{N}$ such that $n\lambda x\in\mathbb{Z}^{d}%
,$\linebreak$n\left(  1-\lambda\right)  y\in\mathbb{Z}^{d}\cap\mathcal{S}%
_{\{u,n\left(  n\lambda x+n\left(  1-\lambda\right)  y\right)  -u\}}^{t},$ by
(\ref{superh}),
\begin{equation}
\log h_{t}^{\left(  p\right)  }\left(  n\left(  1-\lambda\right)  y+n\lambda
x\right)  \geq\log h_{t}^{\left(  p\right)  }\left(  n\left(  1-\lambda
\right)  y\right)  +\log h_{t}^{\left(  p\right)  }\left(  n\lambda x\right)
\ ,
\end{equation}
which, together with the previous property, implies
\begin{equation}
\tau_{p}^{t}\left(  n\left(  1-\lambda\right)  y+n\lambda x\right)  \leq
\tau_{p}^{t}\left(  n\left(  1-\lambda\right)  y\right)  +\tau_{p}^{t}\left(
n\lambda x\right)  \ .
\end{equation}
Therefore, $\tau_{p}^{t}$ is a convex, hence continuous, function on
$\mathbb{Q}^{d}$ and can be extended to a convex and homogeneous-of-order-one
function on $\mathbb{R}^{d}.$ Furthermore, for $p\in\left(  1-\frac{1}{c_{3}%
},1\right)  ,$ from the inequality
\begin{equation}
h_{t}^{\left(  p\right)  }\left(  \left\lfloor nx\right\rfloor \right)
\leq\mathbb{P}_{p}\{0<\left\vert \mathbf{C}_{\{0,\left\lfloor nx\right\rfloor
\}}\right\vert <\infty\}\leq\frac{\left(  c_{3}\left(  1-p\right)  \right)
^{\varphi\left(  \left\lfloor nx\right\rfloor \right)  }}{1-c_{3}\left(
1-p\right)  }%
\end{equation}
and Proposition \ref{deffi-} we get (\ref{tau>fi}).
\end{proof}

Since for any $p\in\left(  0,1\right)  $ and $d\geq2$ (cf. \cite{G} section
8.5) there exists $c_{-}=c_{-}\left(  p,d\right)  >0$ such that
\begin{equation}
\mathbb{P}_{p}\{0\longleftrightarrow x\ ,\ \left\vert \mathbf{C}%
_{\{0,x\}}\right\vert <\infty\}\leq e^{-c_{-}\left\Vert x\right\Vert }\ ,
\label{ubP}%
\end{equation}
while (\ref{lbP}) implies that there exists $c_{+}=c_{+}\left(  p,d\right)
>0$ such that
\begin{equation}
h_{t}^{\left(  p\right)  }\left(  x\right)  \geq e^{-c_{+}\left\Vert
x\right\Vert }\ . \label{lbh}%
\end{equation}
it follows that $\tau_{p}$ is finite and is an equivalent norm in
$\mathbb{R}^{d}.$

Furthermore, let
\begin{equation}
\mathbb{R}^{d}\ni s\longmapsto H_{t}^{\left(  p\right)  }\left(  s\right)
:=\sum_{x\in\mathbb{Z}^{d}}h_{t}^{\left(  p\right)  }\left(  x\right)
e^{\left\langle s,x\right\rangle }\in\overline{\mathbb{R}}\ .
\end{equation}
(\ref{ubP}) implies that $\forall p\in\left(  p_{c}\left(  d\right)
,1\right)  , $ the effective domain of $H_{t}\left(  s\right)  ,$%
\begin{equation}
\mathcal{D}_{t}^{p}:=\left\{  s\in\mathbb{R}^{d}:H_{t}^{\left(  p\right)
}\left(  s\right)  <\infty\right\}  \ ,
\end{equation}
is not empty since $\mathcal{\mathring{D}}_{t}^{p}\supseteq\mathcal{\mathring
{K}}_{t}^{p}\ni0,$ where
\begin{equation}
\mathcal{K}_{t}^{p}:=\bigcap_{\hat{x}\in\mathbb{S}^{d-1}}\left\{
s\in\mathbb{R}^{d}:\left\langle s,\hat{x}\right\rangle \leq\tau_{t}^{p}\left(
\hat{x}\right)  \right\}
\end{equation}
is the convex body polar with respect to $\mathcal{U}_{t}^{p}:=\left\{
x\in\mathbb{R}^{d}:\tau_{t}^{p}\left(  x\right)  \leq1\right\}  .$

\section{Renormalization}

Let
\begin{equation}
\mathcal{W}:=\bigcap_{\hat{x}\in\mathbb{S}^{d-1}}\left\{  s\in\mathbb{R}%
^{d}:\left\langle s,\hat{x}\right\rangle \leq\bar{\varphi}\left(  \hat
{x}\right)  \right\}
\end{equation}
and, for any $x\in\mathbb{R}^{d},$ let $\mathbb{S}_{x}^{d-1}:=\left\{  \hat
{s}\in\mathbb{S}^{d-1}:s\in\mathcal{W},\ \left\langle s,\hat{x}\right\rangle
=\bar{\varphi}\left(  \hat{x}\right)  \right\}  .$

\begin{definition}
Given $t\in\mathbb{S}^{d-1},$ for any $x,y\in\mathbb{Z}^{d}$ such that
$\left\langle t,x\right\rangle \leq\left\langle t,y\right\rangle ,$ let
\begin{equation}
\left\{  \mathcal{H}_{x}^{t}\longleftrightarrow\mathcal{H}_{y}^{t}\right\}
:=\left\{  \omega\in\Omega:0<\left\vert \mathbf{C}_{\left\{  x^{\prime
},y^{\prime}\right\}  }\right\vert <\infty\ ;\ x^{\prime}\in\mathcal{H}%
_{x}^{t,-}\cap\mathbb{Z}^{d},\ y^{\prime}\in\mathcal{H}_{y}^{t,+}%
\cap\mathbb{Z}^{d}\right\}  \ .
\end{equation}
We define
\begin{equation}
\varphi_{t}\left(  x,y\right)  :=\min\left\{  \left\vert \mathbf{S}_{\left\{
x,y\right\}  }\left(  \omega\right)  \right\vert :\omega\in\left\{
\mathcal{H}_{x}^{t}\longleftrightarrow\mathcal{H}_{y}^{t}\right\}  \right\}
\end{equation}
and
\begin{equation}
\phi_{t}\left(  x,y\right)  :=\min_{\omega\in\left\{  \mathcal{H}_{x}%
^{t}\longleftrightarrow\mathcal{H}_{y}^{t}\right\}  }\left\vert \left\{
e^{\ast}\in\mathbf{S}_{\left\{  x,y\right\}  }\left(  \omega\right)  :e^{\ast
}\subset\mathcal{S}_{\left\{  x,y\right\}  }^{t}\right\}  \right\vert \ .
\end{equation}

\end{definition}

Notice that, by translation invariance, $\phi_{t}\left(  x,y\right)  =\phi
_{t}\left(  y-x,0\right)  $ therefore we are allowed to write $\phi_{t}\left(
x,y\right)  =\phi_{t}\left(  y-x\right)  .$ Moreover,
\begin{equation}
\varphi_{t}\left(  x,y\right)  =\min_{\left(  z^{\prime},y^{\prime}\right)
\in\mathcal{H}_{x}^{t,-}\cap\mathbb{Z}^{d}\times\mathcal{H}_{y}^{t,+}%
\cap\mathbb{Z}^{d}}\varphi\left(  y^{\prime}-z^{\prime}\right)  =\min
_{y^{\prime}\in\mathcal{H}_{y}^{t,+}\cap\mathbb{Z}^{d}}\varphi\left(
y^{\prime}-x\right)  \ .
\end{equation}
Hence we set $\varphi_{t}\left(  y-x\right)  :=\varphi_{t}\left(  x,y\right)
.$ We also remark that Lemma \ref{sfi} and Proposition \ref{deffi-} imply the
existence of $\bar{\varphi}_{t}\left(  x\right)  :=\lim_{n\rightarrow\infty
}\frac{\varphi_{t}\left(  \left\lfloor nx\right\rfloor \right)  }{n}$ for any
$x\in\mathbb{R}^{d}$ such that $\left\langle t,x\right\rangle >0.$

\begin{lemma}
For any $x\in\mathbb{R}^{d}$ and $t\in\mathbb{S}_{x}^{d-1},$ there exists
$\lim_{n\rightarrow\infty}\frac{\phi_{t}\left(  \left\lfloor nx\right\rfloor
\right)  }{n}=\bar{\varphi}\left(  x\right)  .$
\end{lemma}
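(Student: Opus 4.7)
The plan is to sandwich $\phi_{t}(\lfloor nx\rfloor)/n$ between two quantities both tending to $\bar{\varphi}(x)$, exploiting the polar characterisation of $t\in\mathbb{S}_{x}^{d-1}$: there exists $s\in\mathcal{W}$ with $\hat{s}=t$ and $\langle s,x\rangle=\bar{\varphi}(x)$, hence $\bar{\varphi}(z)\geq\langle s,z\rangle=|s|\langle t,z\rangle$ for every $z\in\mathbb{R}^{d}$. For the upper bound, the minimiser of $\varphi(\lfloor nx\rfloor)$ provides a finite cluster joining $0\in\mathcal{H}_{0}^{t,-}$ to $\lfloor nx\rfloor\in\mathcal{H}_{\lfloor nx\rfloor}^{t,+}$, hence a valid witness for $\phi_{t}(\lfloor nx\rfloor)$; restricting to in-strip plaquettes yields $\phi_{t}(\lfloor nx\rfloor)\leq\varphi(\lfloor nx\rfloor)$, and Proposition \ref{deffi-} gives $\limsup_{n}\phi_{t}(\lfloor nx\rfloor)/n\leq\bar{\varphi}(x)$.

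For the lower bound, take $\omega_{n}$ realising $\phi_{t}(\lfloor nx\rfloor)$ with cluster $C_{n}$ joining $x_{n}'\in\mathcal{H}_{0}^{t,-}$ to $y_{n}'\in\mathcal{H}_{\lfloor nx\rfloor}^{t,+}$, and let $P_{n}$ be its in-strip external plaquettes. I would then prune $\omega_{n}$ by closing every bond with at least one endpoint strictly outside the closed strip $\overline{\mathcal{S}_{\{0,\lfloor nx\rfloor\}}^{t}}$; the resulting configuration $\tilde{\omega}_{n}$ still contains a finite cluster $\tilde{C}_{n}$ spanning the strip, because any path in $C_{n}$ from $x_{n}'$ to $y_{n}'$ contains a subpath with endpoints $\tilde{x}_{n}\in\mathcal{H}_{0}^{t}$ and $\tilde{y}_{n}\in\mathcal{H}_{\lfloor nx\rfloor}^{t}$ lying entirely in the closed strip (the last exit from $\mathcal{H}_{0}^{t,-}$ and the first subsequent entry into $\mathcal{H}_{\lfloor nx\rfloor}^{t,+}$). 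Since $\tilde{C}_{n}\subset\overline{\mathcal{S}_{\{0,\lfloor nx\rfloor\}}^{t}}$, the plaquettes of $\overline{\partial}\tilde{C}_{n}$ outside the strip reduce to $O(1)$ faces adjacent to $\tilde{x}_{n},\tilde{y}_{n}$, and the pruning leaves in-strip bonds untouched, so $|P_{n}|+O(1)\geq|\overline{\partial}\tilde{C}_{n}|\geq\varphi(\tilde{y}_{n}-\tilde{x}_{n})$. The a priori bound $|P_{n}|\leq\varphi(\lfloor nx\rfloor)=O(n)$ combined with $\varphi\geq c_{2}^{-1}\|\cdot\|$ gives $\|\tilde{y}_{n}-\tilde{x}_{n}\|=O(n)$, so by uniform convergence of $\bar{\varphi}_{n}$ on $\mathbb{S}^{d-1}$ (Proposition \ref{deffi-}) and the polar inequality,
\[
\tfrac{|P_{n}|}{n}\;\geq\;\bar{\varphi}\!\left(\tfrac{\tilde{y}_{n}-\tilde{x}_{n}}{n}\right)-o(1)\;\geq\;|s|\,\tfrac{\langle t,\tilde{y}_{n}-\tilde{x}_{n}\rangle}{n}-o(1)\;=\;\bar{\varphi}(x)-o(1),
\]
whence $\liminf_{n}\phi_{t}(\lfloor nx\rfloor)/n\geq\bar{\varphi}(x)$, closing the sandwich.

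The main obstacle is the surgery bookkeeping: one must verify that closing the exterior bonds of $\omega_{n}$ does not, through reshaping of the filled-in cluster $\overline{\tilde{C}_{n}}$ near the confining hyperplanes, create new in-strip plaquettes beyond those already in $P_{n}$. This rests on a careful comparison of the voids enclosed by $C_{n}$ and by $\tilde{C}_{n}$ inside the strip, relying on the locality of the pruning operation to the open exterior of $\overline{\mathcal{S}_{\{0,\lfloor nx\rfloor\}}^{t}}$; once this accounting is in place, the polar inequality supplied by $t\in\mathbb{S}_{x}^{d-1}$ does the rest.
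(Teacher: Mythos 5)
Your overall architecture — sandwich $\phi_t/n$ between an upper bound via $\phi_t\leq\varphi$ and a lower bound via surgery plus the polar relation $\langle s,x\rangle=\bar{\varphi}(x)$ — is the right shape, and the upper half and the final polar computation are fine. The gap is in the surgery accounting for the lower bound, and it is not the secondary issue you flag at the end but the primary claim: that the plaquettes of $\overline{\partial}\tilde{C}_n$ lying outside the strip ``reduce to $O(1)$ faces adjacent to $\tilde{x}_n,\tilde{y}_n$.'' This is false in general. After pruning, $\tilde{C}_n$ can meet the confining hyperplanes $\mathcal{H}_0^t$ and $\mathcal{H}_{\lfloor nx\rfloor}^t$ in sets of large cardinality, and every vertex of $\tilde{C}_n$ on, say, $\mathcal{H}_0^t$ whose $-u$ neighbour has been closed contributes a boundary plaquette with centroid at negative $t$-height, hence not contained in $\mathcal{S}^t_{\{0,\lfloor nx\rfloor\}}$. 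These plaquettes count toward $|\overline{\partial}\tilde{C}_n|\geq\varphi(\tilde{y}_n-\tilde{x}_n)$ but not toward $|P_n|=\phi_t(\lfloor nx\rfloor)$. Nothing in the definition of $\phi_t$ caps the cross-section of the optimal cluster at the two fixed confining hyperplanes (indeed a minimiser is free to carry arbitrarily much structure there and beyond, since such plaquettes are not charged), so the discrepancy $|\overline{\partial}\tilde{C}_n|-|P_n|$ can be of order $n$, which is exactly the order you cannot afford to lose.

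The paper's proof supplies precisely the idea your surgery lacks: do not cap at the two fixed outer hyperplanes, but instead tile the strip by $O(n^{1-\delta})$ sub-slabs of width $N$ near each end and use a pigeonhole argument. Since the total in-strip plaquette count is $O(n)$, there is a sub-slab near each end carrying at most $O(n^{2\delta})$ plaquettes; connectedness of the surface then forces the cluster's trace on the far hyperplane of that sub-slab to have diameter $O(n^{2\delta})$, so the capping surgery performed there creates only $O(n^{2\delta(d-1)})$ new boundary plaquettes. With $\delta<\tfrac{1}{2(d-1)}$ this is $o(n)$, yielding
\[
\varphi_t\bigl(\lfloor n\hat{x}\rfloor-2\lfloor K_n^N(\delta)N\hat{x}\rfloor\bigr)\leq\phi_t(\lfloor n\hat{x}\rfloor)+o(n)\,,
\]
and dividing by $n-2n^{1-\delta}$ closes the lower bound. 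The essential point is that the location of the cut must be chosen configuration-dependently so that the cluster's cross-section there is small; capping at the a priori fixed hyperplanes, as your proposal does, gives no control on the number of new plaquettes. Your proposal, as written, cannot be repaired merely by a ``careful comparison of voids'': one must add the pigeonhole-over-slabs step or an equivalent mechanism that bounds the cross-section at the cut.
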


\begin{proof}
For any $t\in\mathbb{S}^{d-1},\ \phi_{t}$\ is superadditive and consequently
$\bar{\phi}_{t}\left(  x\right)  :=\lim_{n\rightarrow\infty}\frac{\phi
_{t}\left(  \left\lfloor nx\right\rfloor \right)  }{n}$ exists for any
$x\in\mathbb{R}^{d}$ such that $\left\langle t,x\right\rangle >0.$
Furthermore, $\bar{\phi}_{t}$ is a homogeneous-of-order-one function.

Given $t\in\mathbb{S}^{d-1}$ and $x\in\mathbb{Z}^{d}$ such that $\left\langle
t,x\right\rangle >0,$ by the definition of $\phi_{t}$ and $\varphi_{t},$ it
follows that $\phi_{t}\left(  x\right)  \leq\varphi_{t}\left(  x\right)  .$
Moreover, by the convexity of $\bar{\varphi},$ if $t$ and $x$ are chosen to be
in polar relation with respect to $\bar{\varphi},$ we have $\bar{\varphi}%
_{t}\left(  x\right)  =\bar{\varphi}\left(  x\right)  .$ Hence, $\forall
x\in\mathbb{R}^{d}$ and $t\in\mathbb{S}_{x}^{d-1},$\ $\bar{\phi}_{t}\left(
x\right)  \leq\bar{\varphi}\left(  x\right)  .$

On the other hand, for any $\hat{x}\in\mathbb{S}^{d-1}$ and $t\in
\mathbb{S}_{x}^{d-1},$ let us consider the slab $\mathcal{S}_{\left\{
0,\left\lfloor n\hat{x}\right\rfloor \right\}  }^{t},$ with $n$ a large
integer. Given $\delta\in\left(  0,\frac{1}{2\left(  d-1\right)  }\right)
,\ N\in\mathbb{N},$ and $K_{n}^{N}\left(  \delta\right)  :=\left\lfloor
\frac{n^{1-\delta}}{N}\right\rfloor ,$ we can decompose $\mathcal{S}_{\left\{
0,\left\lfloor n\hat{x}\right\rfloor \right\}  }^{t}$ as follows
\begin{equation}
\mathcal{S}_{\left\{  0,\left\lfloor n\hat{x}\right\rfloor \right\}  }%
^{t}=\mathcal{S}_{\left\{  0,\left\lfloor K_{n}^{N}\left(  \delta\right)
N\hat{x}\right\rfloor \right\}  }^{t}\cup\mathcal{S}_{\left\{  \left\lfloor
K_{n}^{N}\left(  \delta\right)  N\hat{x}\right\rfloor ,\left\lfloor n\hat
{x}\right\rfloor -\left\lfloor K_{n}^{N}\left(  \delta\right)  N\hat
{x}\right\rfloor \right\}  }^{t}\cup\mathcal{S}_{\left\{  \left\lfloor
n\hat{x}\right\rfloor -\left\lfloor K_{n}^{N}\left(  \delta\right)  N\hat
{x}\right\rfloor ,\left\lfloor n\hat{x}\right\rfloor \right\}  }^{t}%
\end{equation}
where
\begin{align}
\mathcal{S}_{\left\{  0,\left\lfloor K_{n}^{N}\left(  \delta\right)  N\hat
{x}\right\rfloor \right\}  }^{t}  &  =\bigcup_{i=0,..,K_{n}^{N}\left(
\delta\right)  -1}\mathcal{S}_{\left\{  \left\lfloor iN\hat{x}\right\rfloor
,\left\lfloor \left(  i+1\right)  N\hat{x}\right\rfloor \right\}  }^{t}\\
\mathcal{S}_{\left\{  \left\lfloor n\hat{x}\right\rfloor -\left\lfloor
K_{n}^{N}\left(  \delta\right)  N\hat{x}\right\rfloor ,\left\lfloor n\hat
{x}\right\rfloor \right\}  }^{t}  &  =\bigcup_{i=0,..,K_{n}^{N}\left(
\delta\right)  -1}\mathcal{S}_{\left\{  \left\lfloor n\hat{x}\right\rfloor
-\left\lfloor \left(  i+1\right)  N\hat{x}\right\rfloor ,\left\lfloor n\hat
{x}\right\rfloor -\left\lfloor iN\hat{x}\right\rfloor \right\}  }^{t}\ .
\end{align}
For any configuration in $\left\{  \omega\in\Omega:\left\vert \overline
{\partial}\mathbf{C}_{\left\{  0,\left\lfloor n\hat{x}\right\rfloor \right\}
}\left(  \omega\right)  \right\vert =\varphi_{t}\left(  \left\lfloor n\hat
{x}\right\rfloor \right)  \right\}  ,$ since there exists a constant $c_{+}$
such that $\varphi_{t}\left(  \left\lfloor n\hat{x}\right\rfloor \right)  \leq
c_{+}n,$ there is at least one slab in the collection $\left\{  \mathcal{S}%
_{\left\{  \left\lfloor iN\hat{x}\right\rfloor ,\left\lfloor \left(
i+1\right)  N\hat{x}\right\rfloor \right\}  }^{t}\right\}  _{i\in\left\{
0,..,K_{n}^{N}\left(  \delta\right)  -1\right\}  },$ which we denote by
$\mathcal{S}_{\left\{  \left\lfloor jN\hat{x}\right\rfloor ,\left\lfloor
\left(  j+1\right)  N\hat{x}\right\rfloor \right\}  }^{t},$ such that
\begin{equation}
\left\vert \left\{  e^{\ast}\in\mathbf{S}_{\left\{  0,\left\lfloor n\hat
{x}\right\rfloor \right\}  }\left(  \omega\right)  :e^{\ast}\subset
\mathcal{S}_{\left\{  \left\lfloor jN\hat{x}\right\rfloor ,\left\lfloor
\left(  j+1\right)  N\hat{x}\right\rfloor \right\}  }^{t}\right\}  \right\vert
\leq c_{4}^{\prime}n^{2\delta}\ ,
\end{equation}
with $c_{4}^{\prime}=c_{4}^{\prime}\left(  N\right)  .$ The same argument also
apply to the slabs in the collection\linebreak$\left\{  \mathcal{S}_{\left\{
\left\lfloor n\hat{x}\right\rfloor -\left\lfloor \left(  i+1\right)  N\hat
{x}\right\rfloor ,\left\lfloor n\hat{x}\right\rfloor -\left\lfloor iN\hat
{x}\right\rfloor \right\}  }^{t}\right\}  _{i\in\left\{  0,..,K_{n}^{N}\left(
\delta\right)  -1\right\}  },$ hence there exists $l\in\left\{  0,..,K_{n}%
^{N}\left(  \delta\right)  -1\right\}  $ such that
\begin{equation}
\left\vert \left\{  e^{\ast}\in\mathbf{S}_{\left\{  0,\left\lfloor n\hat
{x}\right\rfloor \right\}  }\left(  \omega\right)  :e^{\ast}\subset
\mathcal{S}_{\left\{  \left\lfloor n\hat{x}\right\rfloor -\left\lfloor \left(
l+1\right)  N\hat{x}\right\rfloor ,\left\lfloor n\hat{x}\right\rfloor
-\left\lfloor lN\hat{x}\right\rfloor \right\}  }^{t}\right\}  \right\vert \leq
c_{4}^{\prime}n^{2\delta}\ .
\end{equation}
Let $\mathbf{C}_{j,l}^{t}\left(  \omega\right)  $ be a connected component of
$\mathbf{C}_{\left\{  0,\left\lfloor n\hat{x}\right\rfloor \right\}  }\left(
\omega\right)  \cap\mathcal{S}_{\left\{  \left\lfloor jN\hat{x}\right\rfloor
,\left\lfloor n\hat{x}\right\rfloor -\left\lfloor lN\hat{x}\right\rfloor
\right\}  }^{t}$ connecting $\mathcal{H}_{\left\lfloor jN\hat{x}\right\rfloor
}^{t}$ with $\mathcal{H}_{\left\lfloor n\hat{x}\right\rfloor -\left\lfloor
lN\hat{x}\right\rfloor }^{t,+}.$ Since the sum of the diameters of the
components of the subgraph of $\mathfrak{G}$ induced by $\left\{  e^{\ast}%
\in\mathbf{S}_{\left\{  0,\left\lfloor n\hat{x}\right\rfloor \right\}
}\left(  \omega\right)  :\left\vert e^{\ast}\cap\mathcal{H}_{y}^{t}\right\vert
>0\right\}  ,$ with $y=\left\lfloor jN\hat{x}\right\rfloor ,\left\lfloor
n\hat{x}\right\rfloor -\left\lfloor lN\hat{x}\right\rfloor ,$ is smaller than
$c_{4}^{\prime}n^{2\delta},$ there exists $c_{4}^{\prime\prime}=c_{4}%
^{\prime\prime}\left(  N\right)  $ such that
\begin{align}
\left\vert \left\{  e^{\ast}\in\left(  \overline{\partial}\mathbf{C}_{j,l}%
^{t}\left(  \omega\right)  \right)  ^{\ast}:\left\vert e^{\ast}\cap
\mathcal{H}_{\left\lfloor jN\hat{x}\right\rfloor }^{t,-}\right\vert
>0\right\}  \right\vert  &  \leq c_{4}^{\prime\prime}n^{2\delta\left(
d-1\right)  }\ ,\\
\left\vert \left\{  e^{\ast}\in\left(  \overline{\partial}\mathbf{C}_{j,l}%
^{t}\left(  \omega\right)  \right)  ^{\ast}:\left\vert e^{\ast}\cap
\mathcal{H}_{\left\lfloor n\hat{x}\right\rfloor -\left\lfloor lN\hat
{x}\right\rfloor }^{t,+}\right\vert >0\right\}  \right\vert  &  \leq
c_{4}^{\prime\prime}n^{2\delta\left(  d-1\right)  }\ .
\end{align}
Therefore,
\begin{equation}
\varphi_{t}\left(  \left\lfloor n\hat{x}\right\rfloor -2\left\lfloor K_{n}%
^{N}\left(  \delta\right)  N\hat{x}\right\rfloor \right)  \leq\phi_{t}\left(
\left\lfloor n\hat{x}\right\rfloor \right)  +2c_{4}^{\prime\prime}%
n^{2\delta\left(  d-1\right)  }\ .
\end{equation}
Dividing by $n-2n^{1-\delta}$ and taking the limit for $n\rightarrow\infty,$
we obtain $\bar{\varphi}\left(  x\right)  \leq\bar{\phi}_{t}\left(  x\right)
.$
\end{proof}

\subsection{Proof of Theorem \ref{thm1}}

For any $x\in\mathbb{Z}^{d}$ and $p\in\left(  1-\frac{1}{c_{3}},1\right)  ,$
by Proposition \ref{P1}, we are left with the estimate of the probability that
there exists a finite clusters $\mathbf{C}_{\left\{  0,x\right\}  }$ such that
$\left\vert \overline{\partial}\mathbf{C}_{\left\{  0,x\right\}  }\right\vert
=\left\vert \mathbf{S}_{\left\{  0,x\right\}  }\right\vert \leq\left(
1+\delta\right)  \varphi\left(  x\right)  ,$ for $\delta$ larger than the
value $\delta^{\ast}$ given in (\ref{d*}).

Let $t\in\mathbb{S}_{x}^{d-1}.$ Moreover, given $N\in\mathbb{N}$ larger than
$1,$ let us set $K^{N}\left(  x\right)  :=\left\lfloor \frac{\left\Vert
x\right\Vert }{N}\right\rfloor -1$ and%
\begin{align}
\mathcal{H}_{i}^{t}  &  :=\mathcal{H}_{\left\lfloor iN\hat{x}\right\rfloor
}^{t}\ ;\mathcal{H}_{i}^{t,-}:=\mathcal{H}_{\left\lfloor iN\hat{x}%
\right\rfloor }^{t,-}\ ;\ \mathcal{H}_{i}^{t,+}:=\mathcal{H}_{\left\lfloor
iN\hat{x}\right\rfloor }^{t,+}\ ,\ i=0,..,K^{N}\left(  x\right)  \ ;\\
\mathcal{H}_{\left\lfloor \left(  K^{N}\left(  x\right)  +1\right)  N\hat
{x}\right\rfloor }^{t}  &  :=\mathcal{H}_{x}^{t}\ ;\ \mathcal{H}_{x}%
^{t,-}:=\mathcal{H}_{\left\lfloor \left(  K^{N}\left(  x\right)  +1\right)
N\hat{x}\right\rfloor }^{t,-}\ ;\\
\mathcal{S}_{i}^{t}  &  :=\mathcal{H}_{i}^{t,+}\cap\mathcal{H}_{i+1}^{t,-}\ .
\end{align}
With a slight notational abuse we still denote by $\mathbf{S}_{\{0,x\}}$ its
representation as a hypersurface in $\mathbb{R}^{d}$ and define
\begin{equation}
\mathbf{C}_{i}^{t}:=\mathbf{C}_{\{0,x\}}\cap\mathcal{S}_{i}^{t}\ ;\ \mathbf{S}%
_{i}^{t}:=\mathbf{S}_{\{0,x\}}\cap\mathcal{S}_{i}^{t}\ .
\end{equation}
Hence, $\mathbf{C}_{\{0,x\}}=%
{\textstyle\bigcup_{i=0}^{\left\lfloor \frac{\left\Vert x\right\Vert }%
{N}\right\rfloor -1}}
\mathbf{C}_{i}^{t}$ and $\mathbf{S}_{\{0,x\}}\cap\mathcal{S}_{\{0,x\}}%
^{t}\subseteq%
{\textstyle\bigcup_{i=0}^{\left\lfloor \frac{\left\Vert x\right\Vert }%
{N}\right\rfloor -1}}
\mathbf{S}_{i}^{t}\ .$

We say that a slab $\mathcal{S}_{i}^{t}$ is \emph{bad} if $\mathbf{S}_{i}^{t}
$ is not connected, otherwise we call it \emph{good}, and call \emph{crossing}
any connected component $\mathbf{s}$ of $\mathbf{S}_{i}^{t}$ such that,
denoting by $\mathcal{T}\left(  \mathbf{s}\right)  $ the compact subset of
$\mathcal{S}_{i}^{t}$ whose boundary is $\mathbf{s},$ there is a path in
$\mathbb{L}^{d}\cap\mathcal{T}\left(  \mathbf{s}\right)  $ connecting
$\mathcal{H}_{i}^{t}$ with $\mathcal{H}_{i+1}^{t}.$

We remark that since $\mathbf{C}_{\{0,x\}}$ is connected, the existence of two
crossings in $\mathcal{S}_{i}^{t}$ implies the existence of two disjoint paths
connecting $\mathcal{H}_{i}^{t}$ and $\mathcal{H}_{i+1}^{t}$ while the
converse does not hold true in general.

Let $\eta$ be the fraction of slabs where there are at least two crossings.
Since any crossing is composed by at least $\phi_{t}\left(  \left\lfloor
N\hat{x}\right\rfloor \right)  $ plaquettes, we have
\begin{equation}
\frac{\left\Vert x\right\Vert }{N}\left(  \eta2\phi_{t}\left(  \left\lfloor
N\hat{x}\right\rfloor \right)  +\left(  1-\eta\right)  \phi_{t}\left(
\left\lfloor N\hat{x}\right\rfloor \right)  \right)  =\frac{\left\Vert
x\right\Vert }{N}\left(  1+\eta\right)  \phi_{t}\left(  \left\lfloor N\hat
{x}\right\rfloor \right)  <\left(  1+\delta\right)  \varphi\left(  x\right)
\ .
\end{equation}
Moreover, given $\varepsilon>0,$ there exists $R_{\varepsilon}>0$ such that,
for any $x\in\mathbb{Z}^{d}\cap\left(  R_{\varepsilon}B\right)  ^{c}%
,$\linebreak$\varphi\left(  x\right)  \leq\bar{\varphi}\left(  x\right)
\left(  1+\varepsilon\right)  .$ Hence, choosing $N$ sufficiently large such
that $\phi_{t}\left(  \left\lfloor N\hat{x}\right\rfloor \right)  \leq
\bar{\phi}_{t}\left(  N\hat{x}\right)  \left(  1+\varepsilon\right)  ,$ since
$t\in\mathbb{S}_{x}^{d-1},\ \bar{\phi}_{t}\left(  N\hat{x}\right)
=\bar{\varphi}\left(  N\hat{x}\right)  $ and, by the previous inequality, we
get $\eta<\delta.$

Then, there are at most $\frac{\left\Vert x\right\Vert }{3N}-\eta
\frac{\left\Vert x\right\Vert }{N}\ 3$-tuple of consecutive slabs containing a
single crossing and therefore at most the same number of bad slabs containing
a single crossing. Choosing, $\eta\leq\frac{1}{6},$ it is possible to modify
the configuration of at most $c_{5}N^{d}$ bonds, with $c_{5}=c_{5}\left(
d,\delta\right)  ,$ inside any $3$-tuple of consecutive slabs containing a
single crossing in such a way that the resulting cluster will have at least
one $t$-bond inside each of these slabs. Since these modifications can be
performed independently, this fact and the previous proposition imply that
there exists a positive constant $c_{6}=c_{6}\left(  p\right)  $ such that
\begin{equation}
\frac{g_{t}^{\left(  p\right)  }\left(  x\right)  }{\mathbb{P}_{p}\left\{
0<\left\vert \mathbf{C}_{\{0,x\}}\right\vert <\infty\right\}  }\leq
e^{-\left\Vert x\right\Vert c_{6}}%
\end{equation}
and consequently the \emph{mass-gap} condition $f_{t}^{\left(  p\right)
}\left(  x\right)  \leq e^{-c_{7}\left\Vert x\right\Vert }h_{t}^{\left(
p\right)  }\left(  x\right)  ,\ c_{7}=c_{7}\left(  p\right)  >0,$ uniformly in
$t\in\mathbb{S}_{x}^{d-1}.$

Thus, from (\ref{renh1}) we have
\begin{equation}
\frac{\mathbb{P}_{p}\left\{  0\longleftrightarrow x\ ,\ \left\vert
\mathbf{C}_{\{0,x\}}\right\vert <\infty\right\}  }{\mathbb{P}_{p}\left\{
0\overset{h_{t}}{\longleftrightarrow}x\right\}  }\leq c_{8}\ , \label{asyeq}%
\end{equation}
with $c_{8}=c_{8}\left(  d\right)  >0,$ for any $t\in\mathbb{S}_{x}^{d-1}.$

We now proceed as in Section 4.3 of \cite{CI}. Given $t\in\mathbb{S}^{d-1},$
we extend $f_{t}^{\left(  p\right)  }$ to a function defined on the whole
lattice by setting it equal to zero where it is non defined and set
\begin{equation}
\mathbb{R}^{d}\ni s\longmapsto F_{t}^{\left(  p\right)  }\left(  s\right)
:=\sum_{x\in\mathbb{Z}^{d}}f_{t}^{\left(  p\right)  }\left(  x\right)
e^{\left\langle s,x\right\rangle }\in\overline{\mathbb{R}}\ .
\end{equation}
The renewal equation (\ref{renh}) imply
\begin{equation}
H_{t}^{\left(  p\right)  }\left(  s\right)  =\frac{1}{1-F_{t}^{\left(
p\right)  }\left(  s\right)  }\ .
\end{equation}
For $s\in\mathcal{K}_{t}^{p},$ since
\begin{equation}
\left(  s,x\right)  \leq\max_{s\in\mathcal{K}_{t}^{p}}\left(  s,x\right)
=\tau_{p}^{t}\left(  x\right)  \leq1
\end{equation}
and $h_{t}^{\left(  p\right)  }\left(  x\right)  \leq e^{-\tau_{p}^{t}\left(
x\right)  },\ F_{t}^{\left(  p\right)  }\left(  s\right)  $ is finite,
moreover it is continuous, then, $\forall s\in\partial\mathcal{K}_{t}^{p},$%
\begin{equation}
\mathbb{Z}^{d}\ni x\longmapsto q_{t;s}^{\left(  p\right)  }\left(  x\right)
:=f_{t}^{\left(  p\right)  }\left(  x\right)  e^{\left\langle s,x\right\rangle
}\in\mathbb{R}%
\end{equation}
is the density of the probability measure $Q_{t;s}^{\left(  p\right)  }$ on
$\left(  \mathbb{Z}^{d},\mathcal{B}\left(  \mathbb{Z}^{d}\right)  \right)  $
which,\ has exponentially decaying tails:%
\begin{equation}
f_{t}^{\left(  p\right)  }\left(  x\right)  e^{\left\langle s,x\right\rangle
}\leq e^{-c_{7}\left\Vert x\right\Vert }h_{t}^{\left(  p\right)  }\left(
x\right)  e^{\left\langle s,x\right\rangle }\leq e^{-c_{7}\left\Vert
x\right\Vert }\ .
\end{equation}
If $X$ is a random vector with probability distribution $Q_{t;s}^{\left(
p\right)  },$ denoting by $\mathbb{E}_{p}^{t;s}$ the expectation of a random
variable under $Q_{t;s}^{\left(  p\right)  },$ we set
\begin{equation}
\mu_{t}^{p}\left(  s\right)  :=\mathbb{E}_{p}^{t;s}\left[  X\right]
=\operatorname{grad}\log F_{t}^{\left(  p\right)  }\left(  s\right)  \ ,
\end{equation}
while
\begin{equation}
C_{t}^{p}\left(  s\right)  :=\mathrm{Hess}\log F_{t}^{\left(  p\right)
}\left(  s\right)
\end{equation}
denotes the covariance matrix of $X.$ Since $f_{t}^{\left(  p\right)  }\left(
x\right)  >0$ on a whole half-space, $C_{t}^{p}\left(  s\right)  $ is non
degenerate. Hence,
\begin{equation}
\partial\mathcal{K}_{t}^{p}=\left\{  s\in\mathbb{R}^{d}:F_{t}^{\left(
p\right)  }\left(  s\right)  =1\right\}  \subseteq\mathbb{R}^{d}%
\backslash\mathcal{D}_{t}^{p}%
\end{equation}
is a real analytic strictly convex surface with Gaussian curvature uniformly
bounded away from zero and therefore, because $Q_{t;s}^{\left(  p\right)  }$
is supported on $\mathcal{H}_{0}^{t,+}\cap\mathbb{Z}^{d},\ \mu_{t}^{p}\left(
s\right)  \neq0$ and $\left(  s,\mu_{t}^{p}\left(  s\right)  \right)  >0$ for
any $s\in B_{r}\left(  t\right)  \cap\partial\mathcal{K}_{t}^{p}$ with $r$
sufficiently small.

Let then $s\in B_{r}\left(  t\right)  \cap\partial\mathcal{K}_{t}^{p},$ for
any $\mu\in B\left(  \mu_{t}^{p}\left(  s\right)  \right)  \cap\mathcal{H}%
_{\mu_{t}^{p}\left(  s\right)  }^{s},$ if $\left\{  X_{i}\right\}  _{i\geq1}$
is a sequence of i.i.d. random vectors distributed according to $Q_{t;s}%
^{\left(  p\right)  },$ for $n\in\mathbb{N},$ we can rewrite (\ref{renh}) as
\begin{equation}
h_{t}^{\left(  p\right)  }\left(  \left\lfloor n\mu\right\rfloor \right)
=\delta_{0}\left(  \left\lfloor n\mu\right\rfloor \right)  +e^{-\left\langle
\left\lfloor n\mu\right\rfloor ,s\right\rangle }\sum_{k\geq1}\bigotimes
_{i=1}^{k}Q_{t;s}^{\left(  p\right)  }\left\{  \sum_{i=1}^{k}X_{i}%
=\left\lfloor n\mu\right\rfloor \right\}  \ . \label{renh2}%
\end{equation}
Hence, there exist two positive constant $c_{9}=c_{9}\left(  p\right)  $ and
$c_{10}=c_{10}\left(  p\right)  $ such that
\begin{equation}
\left\Vert n\mu-\sum_{i=1}^{k}\mathbb{E}_{p}^{s;t}\left[  X_{i}\right]
\right\Vert =\left\Vert n\mu-k\mu_{t}^{p}\left(  s\right)  \right\Vert \geq
nc_{9}\left\Vert \mu-\mu_{t}^{p}\left(  s\right)  \right\Vert +\left\vert
n-k\right\vert c_{10}\left\Vert \mu_{t}^{p}\left(  s\right)  \right\Vert \ .
\end{equation}
Therefore, the standard large deviation upper bound for
\begin{equation}
\bigotimes_{i=1}^{k}Q_{t;s}^{\left(  p\right)  }\left\{  \sum_{i=1}^{k}%
X_{i}=\left\lfloor n\mu\right\rfloor \right\}  \leq e^{-c_{11}\frac{\left(
n-k\right)  }{k}^{2}\wedge\left\vert n-k\right\vert -c_{12}\frac{n^{2}}%
{k}\wedge n\left\Vert \mu-\mu_{t}^{p}\left(  s\right)  \right\Vert ^{2}}%
\end{equation}
leads to
\begin{equation}
h_{t}^{\left(  p\right)  }\left(  \left\lfloor n\mu\right\rfloor \right)  \leq
c_{13}\sqrt{n}e^{-\left[  \left\langle \left\lfloor n\mu\right\rfloor
,s\right\rangle +c_{14}n\left\Vert \mu-\mu_{t}^{p}\left(  s\right)
\right\Vert ^{2}\right]  } \label{LDh}%
\end{equation}
with $c_{13}=c_{13}\left(  p\right)  ,\ c_{14}=c_{14}\left(  p\right)  $
positive constants. Then, (\ref{tau}) and (\ref{LDh}) imply
\begin{equation}
\tau_{p}^{t}\left(  \mu\right)  \geq\left\langle \mu,s\right\rangle
+c_{14}\left\Vert \mu-\mu_{t}^{p}\left(  s\right)  \right\Vert ^{2}\ ,
\label{lbtau}%
\end{equation}
that is the strict convexity of $\tau_{p}^{t}.$ Moreover, because $\tau
_{p}^{t},$ being an equivalent norm in $\mathbb{R}^{d},$ is lower
semicontinuous, from (\ref{lbtau}) it follows that $\mu_{t}^{p}\left(
s\right)  $ and $s$ are in polar relation with respect to $\tau_{p}^{t},$
namely $\left\langle s,\mu_{t}^{p}\left(  s\right)  \right\rangle =\tau
_{p}\left(  \mu_{t}^{p}\left(  s\right)  \right)  .$

Furthermore, for any $x\in\mathbb{Z}^{d},$ let $t\in\mathbb{S}_{x}^{d-1}.$
There exist $r^{\prime}>0,\ s\in B_{r^{\prime}}\left(  t\right)  \cap
\partial\mathcal{K}_{t}^{p},$ and $n_{x}\in\mathbb{N}$ such that
\begin{equation}
\left\Vert x-n_{x}\mu_{t}^{p}\left(  s\right)  \right\Vert \leq c_{15}\ ,
\label{x-nmu}%
\end{equation}
where $c_{15}=c_{15}\left(  p\right)  >0.$ Hence, choosing $\alpha\in\left(
0,\frac{1}{2}\right)  ,$ since $Q_{t;s}^{\left(  p\right)  }$ is a centered
lattice distribution, we can apply the local central limit theorem for
$\bigotimes_{i=1}^{k}Q_{t;s}^{\left(  p\right)  }\left\{  \sum_{i=1}^{k}%
X_{i}=x\right\}  $ to get,
\begin{align}
&  \sum_{k\in\mathbb{N}\ :\ \left\vert k-n_{x}\right\vert <n_{x}^{\frac{1}%
{2}+\alpha}}\bigotimes_{i=1}^{k}Q_{t;s}^{\left(  p\right)  }\left\{
\sum_{i=1}^{k}X_{i}=x\right\} \\
&  =\sum_{k\in\mathbb{N}\ :\ \left\vert k-n_{x}\right\vert <n_{x}^{\frac{1}%
{2}+\alpha}}\frac{\exp\left\{  -\frac{\left(  n_{x}-k\right)  ^{2}}%
{2k}\left\langle \left(  C_{t}^{p}\right)  ^{-1}\left(  s\right)  \mu_{t}%
^{p}\left(  s\right)  ,\mu_{t}^{p}\left(  s\right)  \right\rangle \right\}
}{\sqrt{2\pi k\det C_{t}^{p}\left(  s\right)  }}\left(  1+o\left(  1\right)
\right) \nonumber\\
&  =\sum_{k\in\mathbb{N}\ :\ \left\vert k-n_{x}\right\vert <n_{x}^{\frac{1}%
{2}+\alpha}}\frac{\exp\left\{  -\frac{\left(  n_{x}-k\right)  ^{2}}%
{2n_{x}\left(  1+O\left(  n_{x}^{\alpha-\frac{1}{2}}\right)  \right)
}\left\langle \left(  C_{t}^{p}\right)  ^{-1}\left(  s\right)  \mu_{t}%
^{p}\left(  s\right)  ,\mu_{t}^{p}\left(  s\right)  \right\rangle \right\}
}{\sqrt{2\pi n_{x}\left(  1+O\left(  n_{x}^{\alpha-\frac{1}{2}}\right)
\right)  \det C_{t}^{p}\left(  s\right)  }}\left(  1+o\left(  1\right)
\right) \nonumber\\
&  =\frac{1}{\sqrt{\left(  2\pi n_{x}\right)  ^{d-1}\det C_{t}^{p}\left(
s\right)  \left\langle \left(  C_{t}^{p}\right)  ^{-1}\left(  s\right)
\mu_{t}^{p}\left(  s\right)  ,\mu_{t}^{p}\left(  s\right)  \right\rangle }%
}\left(  1+o\left(  1\right)  \right)  \ .\nonumber
\end{align}
On the other hand, in the complementary range of $k$'s, proceeding as in
(\ref{LDh}) we obtain
\begin{equation}
\sum_{k\in\mathbb{N}\ :\ \left\vert k-n_{x}\right\vert \geq n_{x}^{\frac{1}%
{2}+\alpha}}\bigotimes_{i=1}^{k}Q_{t;s}^{\left(  p\right)  }\left\{
\sum_{i=1}^{k}X_{i}=x\right\}  \leq e^{-c_{15}n_{x}^{2\alpha}}%
\end{equation}
which, by (\ref{x-nmu}), (\ref{renh2}) and (\ref{asyeq}), since $s=s\left(
\hat{x}\right)  ,$ gives back
\begin{equation}
\mathbb{P}_{p}\left\{  0\longleftrightarrow x\ ,\ \left\vert \mathbf{C}%
_{\{0,x\}}\right\vert <\infty\right\}  =\frac{\Phi_{p}\left(  \hat{x}\right)
}{\sqrt{\left(  2\pi\left\Vert x\right\Vert \right)  ^{d-1}}}e^{-\tau_{p}%
^{t}\left(  x\right)  }\left(  1+o\left(  1\right)  \right)  \ ,
\end{equation}
where
\begin{equation}
\mathbb{S}^{d-1}\ni\hat{x}\longmapsto\Phi_{p}\left(  \hat{x}\right)
:=\sqrt{\frac{\left\Vert \mu_{t}^{p}\left(  s\left(  \hat{x}\right)  \right)
\right\Vert ^{d-1}}{\det C_{t}^{p}\left(  s\left(  \hat{x}\right)  \right)
\left\langle \left(  C_{t}^{p}\right)  ^{-1}\left(  s\left(  \hat{x}\right)
\right)  \mu_{t}^{p}\left(  s\left(  \hat{x}\right)  \right)  ,\mu_{t}%
^{p}\left(  s\left(  \hat{x}\right)  \right)  \right\rangle }}\in
\mathbb{R}^{+}\ .
\end{equation}
is a real analytic function on $\mathbb{S}^{d-1}.$

\end{document}